\documentclass[11pt,english]{amsart}
\usepackage[T1]{fontenc}
\usepackage[latin1]{inputenc}
\synctex=-1
\usepackage{verbatim}
\usepackage{amstext}
\usepackage{amsthm}
\usepackage{amssymb}

\makeatletter

\providecommand{\tabularnewline}{\\}

\numberwithin{equation}{section}
\numberwithin{figure}{section}
\theoremstyle{definition}
\newtheorem*{defn*}{\protect\definitionname}
\theoremstyle{plain}
\newtheorem{thm}{\protect\theoremname}
\newenvironment{lyxlist}[1]
	{\begin{list}{}
		{\settowidth{\labelwidth}{#1}
		 \setlength{\leftmargin}{\labelwidth}
		 \addtolength{\leftmargin}{\labelsep}
		 }}
	{\end{list}}
\theoremstyle{plain}
\newtheorem{cor}[thm]{\protect\corollaryname}
\theoremstyle{definition}
\newtheorem{defn}[thm]{\protect\definitionname}
\theoremstyle{remark}
\newtheorem{rem}[thm]{\protect\remarkname}
\theoremstyle{plain}
\newtheorem{lem}[thm]{\protect\lemmaname}
\theoremstyle{plain}
\newtheorem{prop}[thm]{\protect\propositionname}

\usepackage{tikz}
\usepackage{pgfplots}

\makeatother

\makeatother

\usepackage{babel}
\providecommand{\corollaryname}{Corollary}
\providecommand{\definitionname}{Definition}
\providecommand{\lemmaname}{Lemma}
\providecommand{\propositionname}{Proposition}
\providecommand{\remarkname}{Remark}
\providecommand{\theoremname}{Theorem}

\begin{document}

\title[On the dynamics of the line operator $\Lambda_{\{2\},\{3\}}$]{On the dynamics of the line operator $\Lambda_{\{2\},\{3\}}$ on some arrangements of six lines}

\addtolength{\textwidth}{0mm}
\addtolength{\hoffset}{-0mm} 


\global\long\def\CC{\mathbb{C}}%
 
\global\long\def\BB{\mathbb{B}}%
 
\global\long\def\PP{\mathbb{P}}%
 
\global\long\def\QQ{\mathbb{Q}}%
 
\global\long\def\RR{\mathbb{R}}%
 
\global\long\def\FF{\mathbb{F}}%

\global\long\def\DD{\mathbb{D}}%
 
\global\long\def\NN{\mathbb{N}}%
\global\long\def\ZZ{\mathbb{Z}}%
 
\global\long\def\HH{\mathbb{H}}%
 
\global\long\def\Gal{{\rm Gal}}%

\global\long\def\bA{\mathbf{A}}%

\global\long\def\kP{\mathfrak{P}}%
 
\global\long\def\kQ{\mathfrak{q}}%
 
\global\long\def\ka{\mathfrak{a}}%
\global\long\def\kP{\mathfrak{p}}%
\global\long\def\kn{\mathfrak{n}}%
\global\long\def\km{\mathfrak{m}}%

\global\long\def\cA{\mathfrak{\mathcal{A}}}%
\global\long\def\cB{\mathfrak{\mathcal{B}}}%
\global\long\def\cC{\mathfrak{\mathcal{C}}}%
\global\long\def\cD{\mathcal{D}}%
\global\long\def\cH{\mathcal{H}}%
\global\long\def\cK{\mathcal{K}}%

\global\long\def\cF{\mathcal{F}}%
 
\global\long\def\cI{\mathfrak{\mathcal{I}}}%
\global\long\def\cJ{\mathcal{J}}%

\global\long\def\cL{\mathcal{L}}%
\global\long\def\cM{\mathcal{M}}%
\global\long\def\cN{\mathcal{N}}%
\global\long\def\cO{\mathcal{O}}%
\global\long\def\cP{\mathcal{P}}%
\global\long\def\cS{\mathcal{S}}%
\global\long\def\cW{\mathcal{W}}%

\global\long\def\cQ{\mathcal{Q}}%
\global\long\def\kBS{\mathfrak{B}_{6}}%

\global\long\def\a{\alpha}%
 
\global\long\def\b{\beta}%
 
\global\long\def\d{\delta}%
 
\global\long\def\D{\Delta}%
 
\global\long\def\L{\Lambda}%
 
\global\long\def\g{\gamma}%
\global\long\def\om{\omega}%

\global\long\def\G{\Gamma}%
 
\global\long\def\d{\delta}%
 
\global\long\def\D{\Delta}%
 
\global\long\def\e{\varepsilon}%
 
\global\long\def\k{\kappa}%
 
\global\long\def\l{\lambda}%
 
\global\long\def\m{\mu}%

\global\long\def\o{\omega}%
 
\global\long\def\p{\pi}%
 
\global\long\def\P{\Pi}%
 
\global\long\def\s{\sigma}%

\global\long\def\S{\Sigma}%
 
\global\long\def\t{\theta}%
 
\global\long\def\T{\Theta}%
 
\global\long\def\f{\varphi}%
 
\global\long\def\ze{\zeta}%

\global\long\def\deg{{\rm deg}}%
 
\global\long\def\det{{\rm det}}%

\global\long\def\Dem{Proof: }%
 
\global\long\def\ker{{\rm Ker}}%
 
\global\long\def\im{{\rm Im}}%
 
\global\long\def\rk{{\rm rk}}%
 
\global\long\def\car{{\rm car}}%
\global\long\def\fix{{\rm Fix( }}%

\global\long\def\card{{\rm Card }}%
 
\global\long\def\codim{{\rm codim}}%
 
\global\long\def\coker{{\rm Coker}}%

\global\long\def\pgcd{{\rm pgcd}}%
 
\global\long\def\ppcm{{\rm ppcm}}%
 
\global\long\def\la{\langle}%
 
\global\long\def\ra{\rangle}%

\global\long\def\Alb{{\rm Alb}}%
 
\global\long\def\Jac{{\rm Jac}}%
 
\global\long\def\Disc{{\rm Disc}}%
 
\global\long\def\Tr{{\rm Tr}}%
 
\global\long\def\Nr{{\rm Nr}}%

\global\long\def\NS{{\rm NS}}%
 
\global\long\def\Pic{{\rm Pic}}%

\global\long\def\Km{{\rm Km}}%
\global\long\def\rk{{\rm rk}}%
\global\long\def\Hom{{\rm Hom}}%
 
\global\long\def\End{{\rm End}}%
 
\global\long\def\aut{{\rm Aut}}%
 
\global\long\def\SSm{{\rm S}}%

\global\long\def\psl{{\rm PSL}}%
 
\global\long\def\cu{{\rm (-2)}}%
 
\global\long\def\mod{{\rm \,mod\,}}%
 
\global\long\def\cros{{\rm Cross}}%
 
\global\long\def\nt{z_{o}}%

\global\long\def\co{\mathfrak{\mathcal{C}}_{0}}%
\global\long\def\ci{\mathfrak{\mathcal{C}}_{1}}%
\global\long\def\ldt{\Lambda_{\{2\},\{3\}}}%
 
\subjclass[2000]{Primary: 14N20, 14H50, 37D40}
\author{Xavier Roulleau}
\begin{abstract}
The operator $\ldt$ acting on line arrangements is defined by associating
to a line arrangement $\cA$, the line arrangement which is the union
of the lines containing exactly three points among the double points
of $\cA$. We say that six lines not tangent to a conic form an \textit{unassuming}
arrangement if the singularities of their union are only double points,
but the dual line arrangement has six triple points, six $5$-points
and $27$ double points. The moduli space of unassuming arrangements
is the union of a point and a line. The image by the operator $\ldt$
of an unassuming arrangement is again an unassuming arrangement. We
study the dynamics of the operator $\ldt$ on these arrangements and
we obtain that the periodic arrangements are related to the Ceva arrangements
of lines. 
\end{abstract}

\maketitle

\section{Introduction}

In \cite{OSO}, we give the formal definitions of some operators acting
on line arrangements and configurations of points in the projective
plane. The aim of that paper was to show that these operators appear
quite naturally and ubiquitously when constructing or studying line
arrangements with remarkable properties (such as reflexion line arrangements,
simplicial arrangements, or Sylvester-Gallai line arrangements, i.e.,
line arrangements without double points), and therefore one should
start to study these operators for themselves, as well as for finding
interesting line arrangements. The present paper is about the operator
$\ldt$ and certain arrangements of six lines we define below. The
study of the operator $\ldt$ enabled us to find these simple looking
arrangements (see Figure \ref{fig:The-povera-arr}), which (to our
knowledge) have not been noticed before.

Recall that for $k\geq2$, we denote by $\cP_{\{k\}}$ the \textit{point
operator}, which to a line arrangement $\cA$, associates the set
$\cP_{\{k\}}(\cA)$ of $k$-points of $\cA$ i.e., the points where
exactly $k$ lines of $\cA$ meet. The order of $\cP_{\{k\}}(\cA)$
is denoted by $t_{k}(\cA)=|\cP_{\{k\}}(\cA)|$. For $k\geq2$, we
denote by $\cL_{\{k\}}$ (resp. $\cL_{k}$) the \textit{line operator}
which to a finite set of points $P$ associates the set $\cL_{\{k\}}(P)$
(resp. $\cL_{k}(P)$) of lines containing exactly (resp. at least)
$k$ points in $P$. The operator $\L_{\{m\},\{n\}}$ is defined by
$\L_{\{m\},\{n\}}=\cL_{\{n\}}\circ\cP_{\{m\}}$, and we also define
$\Psi_{\{m\},\{n\}}=\cP_{\{n\}}\circ\cL_{\{m\}}$. We denote by $\cD$
the dual operator, which to a set of points (respectively lines) associate
the corresponding set of lines (respectively points) in the dual projective
plane.

The operator $\L_{\{2\},\{3\}}$ is therefore the map which to a line
arrangement $\cA$ returns the union of all the lines containing exactly
three double points of $\cA$. 

A line arrangement $\cA$ of six lines in general position has $15$
double points and the line arrangement $\check{\cA}=\cL_{2}(\cD(\cA))$
(the union of the lines going through at least two points in the dual
set $\cD(\cA)$), is the union of $15$ lines with $45$ double points
and six $5$-points (the points in $\cD(\cA)$), ie. we have $t_{2}(\check{\cA})=45,\,t_{5}(\check{\cA})=6$.
That leads us to the following definition:
\begin{defn*}
We say that an arrangements $\cC_{0}$ of six lines in the projective
plane is \textit{unassuming} if the singularities of $\cC_{0}$ are
$15$ double points and the line arrangement $\check{\co}=\cL_{2}(\cD(\cC_{0}))$
has singularities 
\[
t_{2}(\check{\co})=27,\,t_{3}(\check{\co})=6,\,t_{5}(\check{\co})=6,
\]
we require moreover that the six $5$-points of $\check{\co}$ are
not contained on a conic. 
\end{defn*}
We use the word ``unassuming'' in order to describe something that
is deceptively simple (here $15$ nodal points) but has hidden qualities
or advantages (the dual configuration is not generic). As it will
be clear, one needs the value $t_{3}=6$, because we are dealing with
$6$ lines: the dual of these $6$ triple points creates six new lines.
 We work over the complex numbers; we obtain the following result.
\begin{thm}
\label{thm:Main1}If $\co$ is a generic unassuming arrangement, then
$\L_{\{2\},\{3\}}(\co)$ is also an unassuming arrangement. The (closure
of the) moduli space of unassuming arrangements has two irreducible
components, one is isomorphic to $\PP^{1}$, the other one is a point. 
\end{thm}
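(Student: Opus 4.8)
The plan is to pass to the dual picture, where everything becomes a statement about sextuples of points, and then to reduce the moduli problem to an explicit elimination in a projective frame. By definition $\co$ has only double points, so the dual set $\cD(\co)=\{P_1,\dots,P_6\}$ consists of six points with no three collinear (otherwise some $P_i$ would fail to be a $5$-point of $\check{\co}$) and, by the last clause of the definition, not lying on a conic. The arrangement $\check{\co}$ is the set of $15$ lines $\overline{P_iP_j}$, and a triple point of $\check{\co}$ away from the $P_i$ is exactly a point where three mutually vertex-disjoint connecting lines meet, i.e. a \emph{concurrent syntheme}: a partition $\{ij,kl,mn\}$ of $\{1,\dots,6\}$ into three pairs whose three lines are concurrent. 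Thus \emph{unassuming} translates to: $P_1,\dots,P_6$ in general linear position, off any conic, with exactly $6$ of the $15$ synthemes concurrent. Dually, a line of $\ldt(\co)$ through exactly three double points $q_{ij},q_{kl},q_{mn}$ of $\co$ corresponds to the same syntheme concurrence, so $\ldt(\co)$ is again a sextuple of lines, namely the duals of the $6$ concurrence points, and the whole problem is governed by the rational map sending $(P_1,\dots,P_6)$ to the six concurrence points $(P_1',\dots,P_6')$.

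First I would record the concurrence condition in bracket form. Writing $[ijk]=\det(P_i,P_j,P_k)$, a Pl\"ucker-type expansion of $\det(P_i\times P_j,\,P_k\times P_l,\,P_m\times P_n)$ shows that the syntheme $\{ij,kl,mn\}$ is concurrent if and only if
\[
[ijk]\,[lmn]=[ijl]\,[kmn].
\]
I would then normalize by $\mathrm{PGL}_{3}$, fixing $P_1=[1:0:0]$, $P_2=[0:1:0]$, $P_3=[0:0:1]$, $P_4=[1:1:1]$ and leaving $P_5,P_6$ as the four remaining moduli of six general points. Each of the $15$ brackets becomes an explicit linear or bilinear form in the coordinates of $P_5,P_6$ (for instance $[125]$, $[145]$, $[156]$ are, respectively, a coordinate, a difference of coordinates, and a $2\times2$ minor), so every syntheme condition becomes an explicit polynomial equation. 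The core structural step is to decide which $6$-element subsets of synthemes can vanish simultaneously along a positive-dimensional locus: six independent conditions would be inconsistent on the $4$-dimensional space of sextuples, so the subsets that actually occur must be those for which the bracket relations are highly dependent. I would classify these subsets up to the $S_6$ relabeling action on the Cremona--Richmond configuration of the $15$ synthemes, singling out the combinatorial type whose six equations impose only three independent conditions (giving a curve) and the distinct, more degenerate type giving a rigid configuration.

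With the combinatorial type fixed, I would carry out the elimination directly. Solving the corresponding bracket equations for the coordinates of $P_5,P_6$ and then removing the degenerate sub-loci -- three points collinear, the six points on a conic, or an extra syntheme becoming concurrent (which would raise some $t_k$ and destroy the unassuming property) -- I expect the residual solution set to be a rational curve, whose closure gives the $\PP^1$ component, together with one isolated, especially symmetric orbit giving the point component. On each component I would verify that exactly $6$ synthemes are concurrent, that no three points are collinear, and that the points lie off every conic, and I would check that the classification of types leaves no further components, so that the closure of the moduli space is precisely $\PP^1\sqcup\{\mathrm{pt}\}$.

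Finally, for the first assertion I would substitute the $\PP^1$-parametrization into the map $(P_i)\mapsto(P_i')$, compute the six concurrence points explicitly, and verify that for a generic parameter the resulting sextuple again has no three points collinear, lies off a conic, and has exactly six concurrent synthemes; this shows $\ldt(\co)$ is unassuming and that $\ldt$ descends to a rational self-map of the moduli $\PP^1$, which is the object whose dynamics the rest of the paper analyzes. The main obstacle is the combinatorial classification in the middle step: one must prove that ``exactly six concurrent synthemes'' forces the special dependent type and that no unexpected extra concurrences or higher-order points appear, since the delicate issue is precisely the hidden syzygies among the bracket relations rather than any single computation; the cleanest way to control this is to make those dependencies explicit through the coordinate elimination above.
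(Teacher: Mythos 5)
Your dual reformulation is sound and, for the construction half of the theorem, it is essentially the paper's own method in different clothing: the paper encodes the six triple points and six $5$-points of $\check{\co}$ as non-bases of a rank-$3$ matroid on the $15$ dual lines and computes the realization space by computer algebra, obtaining the one-dimensional family $\kBS$ (with the explicit matrix $M_{\nt}$ of \eqref{eq:Matrix-Mt}) and a second, rigid matroid whose labelled realization space is two Galois-conjugate points giving a single unmarked arrangement. Your syntheme/bracket criterion $[ijk][lmn]=[ijl][kmn]$ is a correct concurrency condition, and fixing a projective frame on $P_1,\dots,P_4$ is exactly the normalization the paper applies to the first four columns. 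Likewise your treatment of the first assertion (substitute the parametrization, compute the six concurrence points, check the same combinatorics) is what the paper does: it computes the normals of $\ldt(\co(\nt))$ explicitly and exhibits a projectivity onto $\co(-1/\nt^{2})$, so that genericity is only needed to keep $-1/\nt^{2}$ out of the finite exceptional set $S$.

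The genuine gap is the middle step, which you correctly flag as the main obstacle but then resolve by assertion: ``singling out the combinatorial type whose six equations impose only three independent conditions'' is the entire difficulty, since one must prove that no other $6$-element subset of the $15$ synthemes (up to the $S_6$ action) admits a realization with exactly six concurrences, no three points collinear, no conic through the six points, and no extra concurrence. The dimension count (six equations on four moduli) only says that admissible types must have dependent equations; it does not classify them, and dependent-but-inadmissible types do occur (e.g.\ the icosahedral configuration behind the values $\nt=\pm2\pm\sqrt5$, where ten synthemes are concurrent, so the arrangement fails to be unassuming). The paper sidesteps this exhaustive combinatorial classification by a geometric argument you do not have: blowing up the six dual points produces a smooth cubic surface, concurrent synthemes become Eckardt points, and the classification of cubic surfaces with at least six Eckardt points \cite{DD} reduces completeness to a stratum-by-stratum check (the one-parameter stratum $4\text{B}$ gives $\kBS$, the Fermat cubic gives the isolated point, and the Clebsch cubic is shown to yield no unassuming arrangement). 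The paper even remarks that the purely combinatorial route you propose was carried out only via dedicated matroid software; so your plan is viable in principle, but as written it replaces the theorem's hardest assertion with an announcement of its answer.
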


We construct explicitly the unassuming arrangements $\cC_{0}=\cC_{0}(\nt)$,
which are parametrized by $\nt$ in $\PP^{1}\setminus S$, where $S=\{\infty,-1,0,1,\pm2\pm\sqrt{5}\}$.
The genericity assumption in Theorem \ref{thm:Main1} comes from the
fact that for finitely many $\nt\notin S$, the image $\L_{\{2\},\{3\}}(\co(z_{o}))$
may correspond to a point in $S$. 

From the above result, starting with an unassuming arrangement $\cC_{0}=\cC_{0}(\nt)$,
one can define by induction a sequence $(\cC_{k})_{k\in\NN}$ of unassuming
arrangements by setting $\cC_{k+1}=\L_{\{2\},\{3\}}(\cC_{k})$. It
is then natural to study these sequences $(\cC_{k})_{k\in\NN}$ and
to expect interesting configurations from their union, in particular
when the sequence is periodic. We obtain the following result.
\begin{thm}
\label{thm:Main2}For fixed $\nt$ in the complement of the unit circle
and any integers $k\neq k'$, the unassuming arrangements $\cC_{k},\cC_{k'}$
are not projectively equivalent. \\
For $\nt$ a primitive $n$-root of unity with $n>1$ odd, the associated
sequence $(\cC_{k})_{k\in\NN}$ with $\cC_{0}=\cC_{0}(\nt)$ is periodic.
For $n\in\{3,5,\dots,21,23\}$, the union of the periodic line arrangements
$(\cC_{k})_{k\in\NN}$ is a Ceva line arrangement or is contained
in a Ceva line arrangement. \\
The line operator $\L_{\{2\},\{3\}}$ acts on the moduli space of
unassuming arrangement (isomorphic to $\PP^{1}(z)$) through the second
Tchebychev polynomial map $z\to2z^{2}-1$.
\end{thm}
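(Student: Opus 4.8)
The plan is to deduce the whole theorem from its last assertion, so I would first establish the Chebyshev formula and then read off the two dynamical statements. Recall from the construction that every generic unassuming arrangement is one of the $\co(\nt)$, $\nt\in\PP^{1}\setminus S$, and that by Theorem~\ref{thm:Main1} the image $\ldt(\co(\nt))$ is again unassuming, hence equal to $\co(\nt')$ for some parameter $\nt'=\nt'(\nt)$. The crux is to compute this dependence explicitly and to recognise it, in the moduli coordinate $z$, as $z\mapsto 2z^{2}-1$.

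To obtain $\nt'$ I would work directly in the explicit coordinates of the construction: write equations for the six lines of $\co(\nt)$, list the $15$ double points as functions of $\nt$, and then determine which of the lines spanned by these points pass through \emph{exactly} three of them. By Theorem~\ref{thm:Main1} there are precisely six such lines, and they form $\ldt(\co(\nt))=\co(\nt')$; normalising this image into the standard shape of the construction produces $\nt'$. The expected outcome is $\nt'=\nt^{2}$ in the multiplicative parameter $\nt$, equivalently $z'=2z^{2}-1$ in the moduli coordinate $z=\tfrac12(\nt+\nt^{-1})$, this being the Joukowski coordinate that identifies the projectively equivalent arrangements $\co(\nt)$ and $\co(\nt^{-1})$. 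This step --- tracking the double points and checking that the six triple-point lines reassemble into an unassuming arrangement of modulus exactly $T_{2}(z)$ --- is the heart of the argument and its main obstacle; it is best organised using the symmetry of $\co(\nt)$ and is naturally delegated to a computer algebra system.

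Granting the Chebyshev formula, the substitution $z=\tfrac12(w+w^{-1})$ conjugates $T_{2}$ to the squaring map $w\mapsto w^{2}$, with $w=\nt$, so that $\cC_{k}$ has moduli coordinate $z_{k}=\tfrac12(\nt^{2^{k}}+\nt^{-2^{k}})$. Since $z$ is a complete invariant of projective equivalence on the $\PP^{1}$-component of the moduli space, a relation $\cC_{k}\simeq\cC_{k'}$ forces $z_{k}=z_{k'}$, hence $\nt^{2^{k}}\in\{\nt^{2^{k'}},\nt^{-2^{k'}}\}$ and therefore $\nt^{2^{k}\pm2^{k'}}=1$; taking absolute values gives $|\nt|=1$, which is excluded, so $\cC_{k}\not\simeq\cC_{k'}$ for $k\neq k'$ whenever $\nt$ lies off the unit circle. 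For periodicity, if $\nt$ is a primitive $n$-th root of unity with $n>1$ odd then $2$ is invertible modulo $n$, so $\nt^{2^{m}}=\nt$ for $m=\mathrm{ord}_{n}(2)$, giving $\cC_{m}=\cC_{0}$; for $n$ even, $2$ is not invertible modulo $n$ and $\nt$ is merely preperiodic, which explains the hypothesis of oddness.

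Finally, to identify the periodic unions with Ceva arrangements I would, for each odd $n\in\{3,5,\dots,23\}$, exploit that the orbit $\{\nt,\nt^{2},\nt^{4},\dots\}$ is finite of length $m=\mathrm{ord}_{n}(2)$, so that $\bigcup_{k}\cC_{k}$ consists of at most $6m$ lines whose coefficients are rational in the root of unity $\zeta_{n}$. I would compute these lines explicitly and verify, after one fixed projective change of coordinates, that each of them is a line of a Ceva arrangement $(x^{N}-y^{N})(y^{N}-z^{N})(z^{N}-x^{N})=0$ for the appropriate $N$. This is a finite check for each value of $n$, again suited to symbolic computation, and yields the last assertion that the union of the periodic arrangements is a Ceva arrangement or is contained in one.
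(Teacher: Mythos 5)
Your strategy for the first and third assertions essentially reproduces the paper's route (explicit computation of the six triple-point lines, renormalization into standard form, semi-conjugacy with $2z^{2}-1$, then a finite symbolic check against Ceva), but your proof of the periodicity assertion has a genuine gap: the relation $\nt'=\nt^{2}$ (the paper's normalization gives $\nt'=-1/\nt^{2}$; the two agree up to relabelling) holds only \emph{after} applying a projective transformation to $\ldt(\co(\nt))$. Since $\ldt$ commutes with projectivities, induction only gives $\cC_{k}=g_{k}(\co(\nt^{2^{k}}))$ for an accumulating product $g_{k}$ of normalizing maps; hence $\nt^{2^{m}}=\nt$ yields $\cC_{m}=g_{m}(\cC_{0})$, i.e.\ periodicity of the \emph{projective equivalence class}, not of the arrangement itself. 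The theorem requires genuine equality of arrangements in the plane (otherwise ``the union of the periodic line arrangements'' is not a finite, well-defined arrangement to compare with Ceva). The paper closes exactly this gap by Theorem \ref{thm:Formula-ABC}: the honest iterates are $\tilde{\cC}_{k}=\cC(a_{k},b_{k},c_{k})$ with $a_{k}=\nt^{\nu_{k}(-1)^{k}}$, where $\nu_{k}=\tfrac{1}{3}(2^{k}-(-1)^{k})$, so genuine periodicity with period $p$ amounts to $n\mid\nu_{p}$, equivalently $(-2)^{p}\equiv1\ ({\rm mod}\ 3n)$, and not to $2^{p}\equiv1\ ({\rm mod}\ n)$ as you claim. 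The case $\nt=\ze_{3}$ shows the two notions truly diverge: your argument predicts period $\mathrm{ord}_{3}(2)=2$, whereas $\cC_{2}=\cC(\ze_{3},\ze_{3}^{-1},\ze_{3}^{2})\neq\cC(1,1,\ze_{3})=\cC_{0}$ and the true period is $3$ (Table \ref{tab:First-periodic-configurations}); meanwhile the projective class is \emph{constant}, since $\varUpsilon(\ze_{3})=-\tfrac{1}{2}$ is a fixed point of $F_{\L}$. The same defect infects your Ceva verification: the lines of $\cC_{k}$ are not those of $\co(\nt^{2^{k}})$, so the finite check must be run on the coordinates governed by the exponents $\nu_{k}$, as the paper does.

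A secondary, fixable error: $z=\tfrac{1}{2}(\nt+\nt^{-1})$ is not a complete invariant of projective equivalence. The paper shows that $\co(\nt)$, $\co(-\nt)$, $\co(1/\nt)$, $\co(-1/\nt)$ are all projectively equivalent (the group $G_{4}$ of relabellings), so the correct invariant is the degree-four quotient $\varUpsilon(\nt)=\tfrac{1}{2}(\nt^{2}+\nt^{-2})$, and in your coordinate projective equivalence of $\cC_{k},\cC_{k'}$ only forces $z_{k}=\pm z_{k'}$. Your non-equivalence argument off the unit circle survives this correction, since $z_{k}=\pm z_{k'}$ still gives $\nt^{2^{k}\pm2^{k'}}=\pm1$ and hence $|\nt|=1$; but the ``complete invariant'' claim as stated is false, and this degree-two discrepancy is the same one that hides the difference between $\nt^{2}$ and $-1/\nt^{2}$ above.
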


We recall that the Ceva$(n)$ line arrangement is (up to projective
equivalence) the union of the following $3n$ lines: 
\[
(x^{n}-y^{n})(x^{n}-z^{n})(y^{n}-z^{n})=0
\]
in the projective plane; this is a rather prominent line arrangement
e.g., for the construction of ball quotient surfaces as cover of the
plane, or for studying Terao's freeness conjecture. In characteristic
$0$, this is the unique known infinite family of Sylvester-Gallai
line arrangements. We do not known if an $n$-periodic unassuming
line arrangement is always contained a Ceva line arrangement, (see
Section \ref{subsec:Periodic-points} where other cases $n$ are discussed).

By duality, one can rephrase the results in Theorem \ref{thm:Main1}
on the action of $\L_{\{2\},\{3\}}$ on unassuming arrangements as
follows:
\begin{thm}
\label{thm:Main3}For a set $P_{6}=\{p_{1},\dots,p_{6}\}$ of six
points in the plane, consider the following property:
\begin{lyxlist}{00.00.0000}
\item [{$(P)$}] The union of the lines containing two points in $P_{6}$
possesses exactly six triple points $p_{1}',\dots,p_{6}'$. The points
of $P_{6}$ are not inscribed in a conic.
\end{lyxlist}
Suppose that $P_{6}$ is generic to satisfy satisfies $(P)$. Then
the set of triple points $P_{6}'=\{p_{1}',\dots,p_{6}'\}$ also satisfies
$(P)$, moreover if the points in $P_{6}$ are real, there exists
a unique set of six real points $P_{6}^{-}$ satisfying (P) and such
that $(P_{6}^{-})'=P_{6}$.
\end{thm}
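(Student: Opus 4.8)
The plan is to derive Theorem \ref{thm:Main3} from Theorems \ref{thm:Main1} and \ref{thm:Main2} by transporting everything through the duality $\cD$, so that property $(P)$ on point sets becomes the unassuming condition on line arrangements and the map $P_6\mapsto P_6'$ becomes the operator $\ldt$. Writing $\cC_0=\cD(P_6)$, a set of six lines, I first record the dictionary. Under $\cD$ a point $p$ becomes a line $\cD(p)$ and the line $\overline{p_ip_j}$ becomes the intersection point $\cD(p_i)\cap\cD(p_j)$; hence the fifteen connecting lines $\cL_2(P_6)=\check{\co}$ dualize to the fifteen double points of $\cC_0$, and $P_6$ has no three collinear points precisely when $\cC_0$ has only double points. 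More generally a $k$-fold point of $\check{\co}$ lies on $k$ connecting lines, so it dualizes to a line meeting exactly $k$ double points of $\cC_0$; in particular the six triple points $p_1',\dots,p_6'$ of $\check{\co}$ dualize to the six lines through exactly three double points, that is $\cD(\ldt(\cC_0))=\{p_1',\dots,p_6'\}=P_6'$. Finally each $p_i$ lies on the five lines $\overline{p_ip_j}$, so the points of $P_6$ are exactly the six $5$-points of $\check{\co}$, and ``$P_6$ inscribed in a conic'' is identical to ``the six $5$-points of $\check{\co}$ lie on a conic''. Thus $\cC_0$ is unassuming if and only if $P_6$ satisfies $(P)$, and the two genericity conditions correspond as well.

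With this dictionary the first assertion is immediate: if $P_6$ is generic satisfying $(P)$ then $\cC_0=\cD(P_6)$ is a generic unassuming arrangement, so by Theorem \ref{thm:Main1} its image $\ldt(\cC_0)$ is again unassuming, and dualizing back gives that $P_6'=\cD(\ldt(\cC_0))$ satisfies $(P)$.

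For the remaining statement I would pass to moduli. By Theorems \ref{thm:Main1} and \ref{thm:Main2} the one-dimensional component of the moduli space is $\PP^1(z)$, the explicit family $\cC_0(\nt)$ realizes it through a relation of the form $2z=\nt+\nt^{-1}$ (the quotient by $\nt\leftrightarrow\nt^{-1}$), and $\ldt$ acts by $z\mapsto2z^2-1$, equivalently by $\nt\mapsto\nt^2$; dually the map $P_6\mapsto P_6'$ is this same Tchebychev map on $\PP^1(z)$. It remains to analyze the real locus. Since $\ldt$ sends a real arrangement to a real arrangement, the all-real type is forward-invariant, and among real moduli points the only forward-invariant ray on which the six points stay real is $z>1$: on $z<-1$ the map already pushes into $z>1$, while $z\in[-1,1]$ is the locus $|\nt|=1$ where the configuration is defined over $\RR$ but its six points occur in complex-conjugate pairs. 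Using the explicit coordinates of $\cC_0(\nt)$ I would confirm that the six points are genuinely all real exactly on this ray, namely $\nt\in\RR_{>0}$, i.e. $z=\tfrac12(\nt+\nt^{-1})>1$. Granting this, the equation $2w^2-1=z$ with $z>1$ has the two real roots $w=\pm\sqrt{(z+1)/2}$ with $|w|>1$, and only $w=+\sqrt{(z+1)/2}>1$ lies in the all-real ray; equivalently, the two square roots $\pm\sqrt{\nt}$ of $\nt\in\RR_{>0}$ contain a single positive one. This singles out a unique all-real preimage $P_6^-$ with $(P_6^-)'=P_6$.

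The main obstacle is precisely this last reality analysis. The abstract duality together with the Tchebychev dynamics only produces the two complex preimages $\pm\sqrt{(z+1)/2}$; to see that exactly one of them is realized by six real points one must descend to the explicit model $\cC_0(\nt)$ and locate where a square root of the parameter enters the point coordinates, so that reality of the six points forces the sign of $\nt$ (equivalently $z>1$). Pinning down this real sub-locus of $\PP^1(z)$---and checking that it is the monotone branch of $z\mapsto2z^2-1$---is the one step where the conceptual dualization does not suffice and the concrete construction of the arrangements must be used.
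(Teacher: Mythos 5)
Your first half is correct and coincides with the paper's route: part one of Theorem \ref{thm:Main3} is exactly Theorem \ref{thm:Main1} read through the duality dictionary you set up (connecting lines of $P_{6}$ $\leftrightarrow$ double points of $\cC_{0}=\cD(P_{6})$, triple points of $\cL_{2}(P_{6})$ $\leftrightarrow$ lines of $\ldt(\cC_{0})$, points of $P_{6}$ $\leftrightarrow$ the six $5$-points, and the conic conditions matching). Your reparametrization $2z=\nt+\nt^{-1}$ with action $\nt\mapsto\nt^{2}$ is also consistent with the paper's $\varUpsilon(\nt)=\tfrac{1}{2}(\nt^{2}+\nt^{-2})$ and $\chi(\nt)=-1/\nt^{2}$ after substituting $\nt^{2}$ for your $\nt$, and your claim that the all-real arrangements are exactly those with moduli $z>1$ is true (normalize four of the real lines to the canonical ones by a \emph{real} projectivity; the resulting matrix must be $M_{\nt}$ with $\nt$ real), so the step you flag as your ``main obstacle'' is fillable.

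The genuine gap is elsewhere: the theorem asserts existence and uniqueness of an actual set of six real points $P_{6}^{-}$ with $(P_{6}^{-})'=P_{6}$, not of a point in the moduli space $\kBS/S_{6}$. Your argument lives entirely at the moduli level and produces only a unique real moduli point $w>1$ with $F_{\L}(w)=z$. Since $\ldt$ commutes with projectivities, this does not pin down a configuration: if $\cB$ is one antecedent of $\cC_{0}$, every $g\cdot\cB$ with $g$ in the stabilizer of $\cC_{0}$ in $\mathrm{PGL}_{3}(\RR)$ is another antecedent in the same moduli class (and this stabilizer is nontrivial here, e.g.\ the Klein group of diagonal sign changes fixing the triangle), while conversely producing an antecedent of $\cC_{0}$ itself, rather than of something merely projectively equivalent to $\cC_{0}$, needs a descent from moduli to configurations. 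The paper closes exactly this hole by rigidifying with the base points of Definition \ref{def:Triangle}: any antecedent of $\cC$ under $\ldt$ shares the three base points of $\cC$, hence by the Lemma following Definition \ref{def:Triangle} both lie in the explicit family $\cC(a,b,c)$ of \eqref{eq:abc}, on which the operator acts by $\cC(a,b,c)\mapsto\cC(\tfrac{b}{c},ac,\tfrac{b}{a})$. Solving $\ldt(\cC(a,b,c))=\cC(u,v,w)$ in Section \ref{subsec:The-ope-invertible} gives the triples $(\pm\rho,\pm w\rho,\pm\tfrac{w}{u}\rho)$ with $\rho=\sqrt{\pm uv/w}$; the sign choices only relabel the lines, so there are exactly two complex antecedents, distinguished by $\rho^{2}=uv/w$ or $\rho^{2}=-uv/w$, of which exactly one is real when $u,v,w$ are. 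That computation proves uniqueness (and existence) of the actual configuration $P_{6}^{-}$, which is the statement required; your moduli argument, even completed, does not, unless you add this triangle rigidification or an equivalent analysis of the stabilizer.
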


See Figure \ref{fig:The-dual-configuration} for an example of such
set of points $P_{6}$ (in black) and its associated set $P_{6}'=\Psi_{\{2\},\{3\}}(P_{6})$
(in red); joining the red points, one can find the set $P_{6}''$
(however, it seems more difficult to find $P_{6}^{-}$ using only
geometry). Depending on the situation, for example if the points in
$P_{6}$ have real coordinates, the sequence $P_{6},P_{6}',P_{6}''$
etc. has only distinct elements. In contrast, for any integer $n>1$,
there exist sequences $P_{6},P_{6}',P_{6}''...$ which are periodic
with period at least $n$. 

Our proof of Theorem \ref{thm:Main3} is analytic. For a discussion
about Theorem \ref{thm:Main3} and classical geometry, such as Pascal's
hexagon Theorem, see \cite{OSO}. %
{} 

\begin{figure}[h]
\begin{center}

\begin{tikzpicture}[scale=3]

\clip(0.8386887503215222,0.4097461379762637) rectangle (3.181712280433072,2.848605609375874);
\draw [domain=0.8386887503215222:3.181712280433072] plot(\x,{(-1.2360680732947948--2.*\x)/1.2360679});
\draw [line width=0.2mm] (1.8944271599999993,0.4097461379762637) -- (1.8944271599999993,2.848605609375874);
\draw [domain=0.8386887503215222:3.181712280433072] plot(\x,{(--1.788854285341042-0.34164074000000033*\x)/0.5527864199999999});
\draw [domain=0.8386887503215222:3.181712280433072] plot(\x,{(--2.3105973134818214E-8-0.1842621400000004*\x)/-0.29814238666666704});
\draw [domain=0.8386887503215222:3.181712280433072] plot(\x,{(-0.3685242684470138--0.2981423866666667*\x)/0.11388024666666663});
\draw [domain=0.8386887503215222:3.181712280433072] plot(\x,{(--1.541640726136417-0.*\x)/0.89442716});
\draw [domain=0.8386887503215222:3.181712280433072] plot(\x,{(--2.5527863506820836-0.34164074000000033*\x)/1.10557284});
\draw [domain=0.8386887503215222:3.181712280433072] plot(\x,{(-0.21114571465895904--1.1055728400000002*\x)/1.44721358});
\draw [domain=0.8386887503215222:3.181712280433072] plot(\x,{(--0.4721358000000002--0.7639320999999999*\x)/2.});
\draw [domain=0.8386887503215222:3.181712280433072] plot(\x,{(--2.341640774658959-1.44721358*\x)/-0.3416407400000001});
\draw [domain=0.8386887503215222:3.181712280433072] plot(\x,{(--3.0786892422350673-1.0786893000000002*\x)/0.25464403333333285});
\draw [domain=0.8386887503215222:3.181712280433072] plot(\x,{(-1.3167183813641654--0.34164074000000033*\x)/-0.21114568});
\draw [domain=0.8386887503215222:3.181712280433072] plot(\x,{(-1.0786892999999997-0.2546440333333333*\x)/-1.3333333333333333});
\draw [domain=0.8386887503215222:3.181712280433072] plot(\x,{(-0.6666667244315985--1.3333333333333333*\x)/1.0786892999999997});
\draw [domain=0.8386887503215222:3.181712280433072] plot(\x,{(-0.40000000693179194--0.5527864200000003*\x)/0.5527864200000001});
\begin{scriptsize}
\draw [fill=black] (2.44721358,1.7236067900000003) circle (0.2mm);
\draw [fill=black] (2.2360679,2.61803395) circle (0.2mm);

\draw [fill=black] (1.89442716,1.7236067900000003) circle (0.2mm);
\draw [fill=black] (3.,1.38196605) circle (0.2mm);
\draw [fill=black] (1.,0.61803395) circle (0.2mm);
\draw [fill=black] (1.89442716,1.17082037) circle (0.2mm);
\draw [fill=red] (2.0786892999999997,1.9513672833333333) circle (0.2mm);
\draw [fill=red] (2.490711933333333,1.5393446499999999) circle (0.2mm);
\draw [fill=red] (1.5527864200000001,0.82917963) circle (0.2mm);
\draw [fill=red] (1.6666666666666667,1.1273220166666666) circle (0.2mm);
\draw [fill=red] (1.89442716,2.06524753) circle (0.2mm);
\draw [fill=red] (2.78885432,1.7236067900000003) circle (0.2mm);
\end{scriptsize}

\end{tikzpicture}

\end{center} 

\caption{\label{fig:The-dual-configuration}The points in $P_{6}$ and the
six associated triple points}
\end{figure}
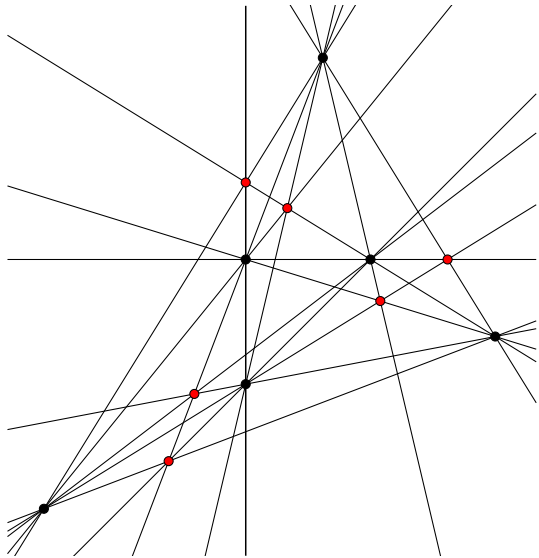

In \cite{SchwartzPent}, Schwartz defines the pentagram map: it is
the map which to a polygon of $m\geq5$ lines i.e., a $m$-tuple of
lines $(\ell_{1},\dots,\ell_{m})$, associate the polygon $(\ell_{1}',\dots,\ell_{m}')$,
where $\ell_{k}'$ is the line containing the points $p_{k},p_{k+2}$,
for $p_{k}=\ell_{k}\cap\ell_{k+1}$ (the indices being taken mod $m$).
That rational self-map on the space of $m$-gons is of interest in
classical projective geometry, algebraic combinatorics, moduli spaces,
cluster algebras, and it is an example of a Liouville-Arnold discrete
integrable system \cite{Berger}, \cite{OST}. Our paper is an example
of another type of operators acting on line arrangements. %

Let us now review the structure of the paper, and some of the content
not mentioned in the above Theorems. 

In Section \ref{sec:unassuming-arrangements-and}, we construct the
moduli space of labelled (or marked) unassuming arrangements. We obtain
that the image by the operator $\ldt$ of an unassuming arrangement
is again an unassuming arrangement, and we compute the action of the
operator $\ldt$ on the moduli space without marking. Since we are
interested in the lines arrangements rather than their class modulo
projective equivalence, we compute the formulas for the equations
of the lines of the successive images of the operator $\ldt$. If
one starts from the configuration $\co=\co(\nt)$ (for some fixed
$\nt\in\PP^{1}$) and inductively defines the sequence $(\cC_{k})_{k\in\NN}$
by $\cC_{k+1}=\L_{\{2\},\{3\}}(\cC_{k})$, the coordinates of the
line arrangement $\cC_{k}$ are controlled by the powers $\nt^{\nu_{k}}$,
where $(\nu_{k})_{k\geq0}$ is the sequence beginning by $0,1,1,3,5,11...$
and defined by 
\[
\nu_{0}=0,\,\,\,\nu_{2k+1}=2\nu_{2k}+1,\,\,\,\nu_{2k}=2\nu_{2k-1}-1.
\]
By contrast, the action of $\ldt$ on the moduli space of unmarked
line arrangements up to projectivity is through iterate of the Tchebychev
map $F_{\L}:\nt\to2\nt^{2}-1$. We discuss also in this section if
the action of $\ldt$ on unassuming line arrangement can be obtained
by a Cremona transformation. 

In Section \ref{sec:dynamics-and-Applications}, after recalling some
definitions on dynamical system, we study the periodic configurations.
The sequence $(\cC_{k})_{k\in\NN}$ starting with $\co=\co(\nt)$
is periodic with period dividing $n>2$ if $\nt^{\nu_{n}}=1$. We
give examples of periodic line arrangements, and show that their union
is strongly related to the Ceva arrangements. We also discuss the
Julia and Fatou domains of $F_{\L}$ and their meaning for $\ldt$.
We then study the isolated $0$ dimensional moduli space of unassuming
arrangements. We obtain that the corresponding line arrangement is
related to the famous Hesse arrangement of $12$ lines. 

The six points in $P_{6}$ which are dual to an unassuming arrangement
are not contained in a conic, thus by blowing up these points, we
can associate to $P_{6}$ a smooth cubic surface $X$. Then the triple
points of $\cL_{2}(P_{6})$ have a geometric meaning for $X$: these
points are Eckardt points, i.e., they are isolated fixed points of
involutions acting on $X$. Using the classification of the automorphisms
group of cubic surfaces, we then show that we obtained all unassuming
arrangements. 

Finally, let us mention that the K3 surfaces obtained as double cover
of the plane ramified over an unassuming arrangement have Picard number
$\geq19$; these surfaces and how they are related through the operator
$\ldt$ are the object of a paper in preparation \cite{RS}. 

We used the used the computer algebra system Magma \cite{Magma};
the Magma code for dealing with matroids, the unassuming line arrangements
and the various operators can be found on arXiv, in the ancillary
file of this paper.

\textbf{Acknowledgements} The author is grateful to Lukas Kühne for
his explanations on the realization of matroids and pointing out the
paper \cite{ACKN}; he is also grateful to the referees for their
comments improving the paper. Support from Centre Henri Lebesgue ANR-11-LABX-0020-01
is acknowledged.

\section{\label{sec:unassuming-arrangements-and}unassuming arrangements and
the $\protect\L_{\{2\},\{3\}}$-operator}

\subsection{\label{subsec:Matroids-and-six}Matroids and moduli space of labelled
unassuming arrangements}

 In the present section, we construct three irreducible components
of the moduli space of unassuming arrangements with a labeling, by
which we mean that the elements of the moduli are $6$-tuples 
\[
\cA=(\ell_{1},\dots,\ell_{6})
\]
of lines. One irreducible component, which we denote by $\mathfrak{B}_{6}$,
is one dimensional and for any arrangement $\cC_{0}$ in $\mathfrak{B}_{6}$,
and up to automorphism of $\PP^{2}$, there exists $\nt\in\PP^{1}(\CC)$
with $\nt\notin S=\{\infty,-1,0,1,\pm2\pm\sqrt{5}\}$ such that the
normals of the six lines lines $\ell_{1},\dots,\ell_{6}$ of $\cC_{0}=\cC_{0}(\nt)$
are given by the six columns of the following matrix
\begin{equation}
M_{\nt}=\left(\begin{array}{c}
\begin{array}{cccccc}
1 & 0 & 0 & 1 & \nt+1 & -\nt+1\\
0 & 1 & 0 & 1 & -\nt+1 & \nt+1\\
0 & 0 & 1 & 1 & 2 & 2
\end{array}\end{array}\right).\label{eq:Matrix-Mt}
\end{equation}
Case $\nt\in\{\infty,0\}$ is degenerate: the arrangement associated
to $M_{\nt}$ has $5$ lines with one triple point. Case $\nt=\pm1$
gives the complete quadrilateral: the six lines going through $4$
points in general position. The line arrangement with $\nt\in\{\pm2\pm\sqrt{5}\}$
given by matrix $M_{\nt}$ has six lines with $15$ nodes but is not
an unassuming arrangement; we discuss that case in the last Section. 

As we show below, the two other irreducible components of the labeled
moduli space are points which are conjugated under the Galois action,
and the two associated line arrangements are projectively equivalent
when one forgets the labelings.

In order to construct the moduli space $\mathfrak{B}_{6}$, we will
use some vocabulary and elementary technics of matroid theory. A matroid
is a pair $(E,\cB)$, where $E$ is a finite set and the elements
of $\cB$ are subsets of $E$, called \textit{basis,} subject to the
following properties:\\
$\bullet$ $\cB$ is non-empty\\
$\bullet$ if $A$ and $B$ are distinct members of $\cB$ and $a\in A\setminus B$,
then there exists $b\in B\setminus A$ such that $(A\setminus\{a\})\cup\{b\}\in\cB$. 

The bases have the same order $n$, called the rank of $(E,\cB)$.
Order $n$ subsets of $E$ that are not bases are called \textit{non-bases}.
We identify $E$ with $\{1,\dots,m\}$. A realization (over some field)
of the matroid $(E,\cB)$ is  the data of a size $n\times m$ matrix
$M$, with columns $C_{1},\dots,C_{m}$, such that any order $n$
subset $\{i_{1},\dots,i_{n}\}$ of $E$ is a non-base if and only
if the size $n$ minor $|C_{i_{1}},\dots,C_{i_{n}}|$ is zero. 

For our problem, we identify the lines of the searched arrangement
to $E=\{1,\dots,m\}$. Since we search for line arrangements in $\PP^{2}$,
one has $n=3$: the columns of the matrix $M$ are the normals of
the lines. Three columns $(i,j,k)$ are non-base if and only if the
lines $i,j,k$ meet at a point, indeed if $(x_{1i},x_{2i},x_{3i}),\dots,(x_{1k},x_{2k},x_{3k})$
are the normals of lines $i,j,k$, the lines $(i,j,k)$ meet at point
$(v_{1}:v_{2}:v_{3})$ implies that the matrix $(x_{s,t})_{1\leq s\leq3,t\in\{i,j,k\}}$
has a non-trivial kernel, and conversely.

We are looking for line arrangements of $15$ lines with singularities
$t_{2}=27,\,t_{3}=6,\,t_{5}=6.$ More precisely, we impose that the
following elements are non-bases
\begin{equation}
\text{NB}_{1}:\,\,\,(1,5,10),(2,4,13),(3,9,13),(5,7,8),(6,11,12),(12,14,15)\label{eq:Line6}
\end{equation}
(these are the triple of lines meeting at the same points), moreover
any triple of lines among the following quintuples 
\begin{equation}
\begin{array}{c}
(1,2,8,9,12),\,(1,6,7,13,15),\,(2,3,5,6,14),\\
(3,4,7,10,12),\,(4,5,9,11,15),\,(8,10,11,13,14),
\end{array}\label{eq:Quintuples}
\end{equation}
must be non-basis (these are the six quintuples of lines meeting at
the $5$-points). In order to obtain the moduli space of such line
arrangement, one takes a $3\times15$ matrix $T$, with entries as
unknowns. One can reduce the number of variables, by taking the four
first columns to be the canonical basis of $\PP^{2}$, ie $C_{1}=(1,0,0)$,
$C_{2}=(0,1,0)$, $C_{3}=(0,0,1)$, $C_{4}=(1,1,1)$ (taking care
that the four minors of $(1,2,3,4)$ are bases of the matroid). Since
the columns are the normals of the lines, one can suppose that in
the remaining columns there is always a $1$ as follows: If the set
$(2,3,i)$ is a basis, then the first entry in row $i$ must be non-zero,
so we assume it\textquoteright s $1$. If not, the first-row entry
in column $i$ must be zero and we fix the second-row entry to be
$1$ (it cannot be $0$ since we would then have a repetition of the
normal $(0,0,1)$). The ideal generated by the $66$ minors corresponding
to the non-bases, defines a variety in $\mathbb{A}^{19}$, which is
our moduli space of $15$ lines arrangements with singularities $t_{2}=27,\,t_{3}=6,\,t_{5}=6.$
One then takes the dual of the six $5$-points and obtain (using the
computer algebra system Magma) the moduli space $\mathfrak{B}_{6}$
(isomorphic to $\PP^{1}$), with the normals in \eqref{eq:Matrix-Mt},
which is what we call the (labelled) moduli space of unassuming arrangements.
The entries of the matrix $T$ are rational functions in the variable
$\nt$. One can compute the determinants of the $3\times3$ minors
of it, their numerators (resp. denominators) are powers and products
of elements in $\{\nt,\nt\pm1,\nt^{2}\pm4\nt-1\}$ (resp. $\{\nt,\nt\pm1\}$).
The vanishing of such additional minors create other singularities,
or gives a degeneration of the arrangement i.e., an arrangement with
repetition of some lines, thus the condition $\nt\notin\{\infty,-1,0,1,\pm2\pm\sqrt{5}\}$.

A difficulty (which does not appear in this paper) in computing the
moduli space of line arrangements was to find the appropriate parametrization
of it $\g:\PP^{1}\stackrel{\simeq}{\to}\mathfrak{B}_{6}$, so that
one can understand clearly the action of the line operator $\L_{\{2\},\{3\}}$
on $\kBS$ and on the plane, and so that one can understand the associated
dynamical systems. For example, the well-known Techbychev map $F_{\L}:z\to2z^{2}-1$
will appear, but our first parametrization gave instead the map $\tilde{F}:z\to\tfrac{z^{2}-16}{4z^{2}}$.
Using the theory of multipliers of rational functions (see e.g., \cite{Silverman}),
we recognized the dynamic of $F_{\L}$ and found another parametrization
$\phi:\PP^{1}\to\PP^{1}$, so that $\phi^{-1}\circ\tilde{F}\circ\phi$
equals to $F_{\L}$. The moduli space $\kBS$ is defined over $\QQ$. 

There are two other moduli spaces of labeled unassuming arrangements,
namely, one can take for non-bases the following triple (which correspond
to the six triple points): 
\begin{equation}
\text{NB}_{2}\,\,\,(1,3,11),(1,4,14),(3,8,15),(4,6,8),(6,11,12),(12,14,15)\label{eq:triple2}
\end{equation}
and the same $60$ triple coming from equation \eqref{eq:Quintuples}.
One obtains that the associated moduli space, the realization space
of that matroid, is the union of two points. We will discuss that
case in Section \ref{subsec:Disassembling-Hesse} and in Section \ref{subsec:The-cubic-surface}. 

We will prove in Section \ref{subsec:The-cubic-surface} that there
are no other irreducible components of the moduli space of unassuming
arrangements, or in other words that we can always label an unassuming
arrangement in a way that leads to Equations \eqref{eq:Quintuples}
and \eqref{eq:Line6} or \eqref{eq:triple2}. We used the computer
algebra system Magma \cite{Magma} for the computations of these moduli
spaces and double-checked using the Homalg package \cite{Homalg}
developed in Julia/Oscar. 

We explain in Section \ref{subsec:Free-arrangements} how we found
these two matroids.

\subsection{\label{subsec:Images of six lines}Image of an unassuming arrangement
by the $\protect\L_{\{2\},\{3\}}$-operator}

Let us fix the parameter $\nt$ and apply the $\L_{\{2\},\{3\}}$-operator
on the line arrangement $\cC_{0}(\nt)\in\mathfrak{B}_{6}$. The $15$
nodal points of $\cC_{0}$ are given by the columns of the following
matrix:
\[
\left(\begin{array}{ccccccccccccccc}
1 & 0 & 0 & -1 & -1 & -1 & 0 & 0 & -2 & 2 & \nt-1 & 0 & \nt+1 & \nt-1 & \nt+1\\
0 & 1 & 0 & -1 & 0 & 1 & -1 & -2 & 0 & 0 & \nt+1 & 2 & \nt-1 & \nt+1 & \nt-1\\
0 & 0 & 1 & 1 & 1 & 0 & 1 & \nt+1 & \nt+1 & \nt-1 & 0 & \nt-1 & -2\nt & -2\nt & 0
\end{array}\right),
\]
these columns are also the normals of the $15$ lines in the dual
configuration. The image of $\cC_{0}$ by the point operator $\cP_{\{2\}}$
is the union of the above $15$ nodal points of $\cC_{0}$. By construction
of $\cC_{0}$, the line arrangement $\check{\cC}_{0}=\cL_{2}\cD(\cC_{0})$
has six triple points, dually that means that there are exactly six
lines in $\PP^{2}$ such that each line contain exactly three points
in $\cP_{\{2\}}(\cC_{0})$, therefore 
\[
\cC_{1}=\cL_{\{3\}}\cP_{\{2\}}(\cC_{0})=\L_{\{2\},\{3\}}(\cC_{0})
\]
is a line arrangement of six lines. The interesting fact is that the
situation is self-similar: $\cC_{1}$ is also an unassuming arrangement.
Indeed, a direct computation shows that the normals of the line arrangement
$\cC_{1}=\L_{\{2\},\{3\}}(\cC_{0})$ are 
\[
\left(\begin{array}{c}
\begin{array}{cccccc}
1+\nt & 1-\nt & -\nt+1 & \nt+1 & 1 & 0\\
1-\nt & 1+\nt & -\nt+1 & \nt+1 & 0 & 1\\
0 & 0 & 2 & 2 & 1 & 1
\end{array}\end{array}\right),
\]
moreover there exists a unique projective transformation such that
the six lines of $\cC_{1}$ are sent respectively to the lines with
normals
\begin{equation}
\left(\begin{array}{c}
\begin{array}{cccccc}
1 & 0 & 0 & 1 & \nt^{2}+1 & \nt^{2}-1\\
0 & 1 & 0 & 1 & \nt^{2}-1 & \nt^{2}+1\\
0 & 0 & 1 & 1 & 2\nt^{2} & 2\nt^{2}
\end{array}\end{array}\right),\label{eq:cC_1-in-B6}
\end{equation}
which is the line arrangement $\cC_{0}(\tfrac{-1}{\nt^{2}})$ in
$\kBS$ (compare with matrix $M_{\nt}$ in \eqref{eq:Matrix-Mt}).
From that description, we see that.
\begin{cor}
\label{cor:For--such}For $\nt\not\in S=\{\infty,-1,0,1,\pm2\pm\sqrt{5}\}$
such that $-\tfrac{1}{\nt^{2}}\notin S$, the image of the unassuming
arrangement $\cC_{0}(\nt)$ by $\ldt$ is an unassuming arrangement.
The operator $\L_{\{2\},\{3\}}$ acts on the moduli of labelled line
arrangements $\kBS$ by the map $\cC_{0}(\nt)\to\cC_{0}(-\tfrac{1}{\nt^{2}})$.
\end{cor}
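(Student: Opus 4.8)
The structural observation that organizes the whole proof is that the unassuming property of the image is \emph{not} something to be checked singularity-by-singularity: by the construction of $\kBS$ in Section \ref{subsec:Matroids-and-six}, every member $\cC_0(z)$ with $z\notin S$ is unassuming. Hence, once the image $\cC_1=\ldt(\cC_0(\nt))$ is shown to be projectively equivalent to the standard member $\cC_0(-1/\nt^2)$ of $\kBS$, the hypothesis $-1/\nt^2\notin S$ delivers the unassuming conclusion for free, and the only real work is to (i) produce the six lines of $\cC_1$ explicitly and (ii) exhibit the projectivity normalizing them.

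For (i), I would first confirm that the fifteen double points of $\cC_0(\nt)$ are exactly the columns of the displayed $3\times15$ matrix, by intersecting pairwise the six lines whose normals are the columns of $M_\nt$ from \eqref{eq:Matrix-Mt}; the hypothesis $\nt\notin S$ is what guarantees these fifteen points are distinct and that $\cC_0$ has no singularity beyond them, so that $\cP_{\{2\}}(\cC_0)$ is precisely this set. I would then locate the six lines meeting this set in exactly three points. These are dual to the six triple points of $\check{\cC}_0=\cL_2\cD(\cC_0)$ guaranteed by the matroid, so the non-base data \eqref{eq:Line6} tells me which triples of the fifteen points are collinear; computing the line through each collinear triple yields six normals, which I would match against the $3\times6$ matrix of normals of $\cC_1$ displayed above. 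This identifies $\cC_1$ as an explicit six-line arrangement.

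For (ii), I would use that the first four of these six normals are in general position, so that there is a \emph{unique} element of $\mathrm{PGL}_3(\CC)$ sending them to the frame $(1,0,0),(0,1,0),(0,0,1),(1,1,1)$. Applying this transformation to the other two normals produces the columns $(\nt^2+1,\nt^2-1,2\nt^2)$ and $(\nt^2-1,\nt^2+1,2\nt^2)$ of \eqref{eq:cC_1-in-B6}. Comparing these against the last two columns of $M_z$ in \eqref{eq:Matrix-Mt} identifies the normalized arrangement with $\cC_0(-1/\nt^2)$; here one must be careful about which of the two lines through the $5$-points is labelled fifth and which sixth, since interchanging them in $M_z$ replaces the parameter $z$ by $-z$. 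Together with the opening reduction this both proves that $\cC_1$ is unassuming and gives the asserted action $\cC_0(\nt)\mapsto\cC_0(-1/\nt^2)$ on $\kBS$.

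The step I expect to be the genuine obstacle is the bookkeeping inside (i) and (ii): choosing a labeling of the six unordered output lines that actually places $\cC_1$ inside the \emph{labelled} space $\kBS$ --- i.e.\ so that the chosen six lines realize the non-base pattern of \eqref{eq:Line6} and \eqref{eq:Quintuples} --- while simultaneously checking that for $-1/\nt^2\notin S$ none of the auxiliary minors vanishes. The latter is exactly the content of the remark in Section \ref{subsec:Matroids-and-six} that the minors of the big matrix are governed by the factors $\{z,z\pm1,z^2\pm4z-1\}$, so their non-vanishing away from $S$ forbids any unintended collinearity from degrading the singularity type. Everything past this combinatorial bookkeeping is a determinant computation, best handed to Magma as indicated in the text.
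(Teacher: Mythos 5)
Your proposal is correct and takes essentially the same route as the paper: compute the fifteen nodes of $\cC_{0}(\nt)$, obtain the six lines of $\ldt(\cC_{0}(\nt))$ as the duals of the six triple points of $\check{\cC_{0}}$ prescribed by the matroid data, normalize by the unique projectivity sending the first four lines to the canonical frame, and identify the result with $\cC_{0}(-\tfrac{1}{\nt^{2}})\in\kBS$, so that the unassuming property follows from $-\tfrac{1}{\nt^{2}}\notin S$. Your explicit attention to the labeling ambiguity (swapping the fifth and sixth lines replaces the parameter $z$ by $-z$) is a detail the paper passes over silently but is handled consistently with it.
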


One may generalize the first part of Corollary \ref{cor:For--such},
since any unassuming line arrangement $\cC$ is always projectively
equivalent to a line arrangement $\cC_{0}(\nt)$, $\nt\notin S$.
Also, as we will see in Section \ref{tab:First-periodic-configurations},
the image of an unassuming arrangement in the moduli with two points
is also an unassuming arrangement, and in the same family.

See Figure \ref{fig:The-povera-arr} for a picture of an unassuming
arrangement $\co$ (the lines in black) and its image $\cC_{1}$ (the
blue lines) by $\ldt$. It is the image of an unassuming arrangement
$\cC_{0}(\nt)$ by the projective transformation sending two of the
three base points (see Definition \ref{def:Triangle}) to infinity. 

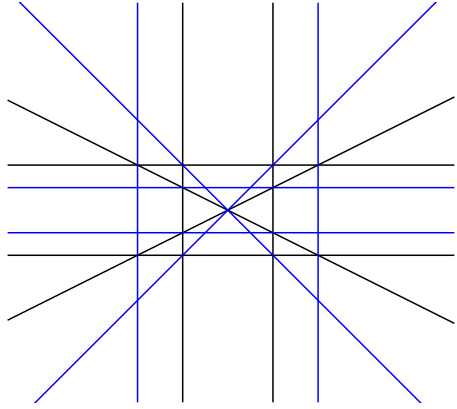
\begin{figure}[h]
\begin{center}

\begin{tikzpicture}[scale=0.6]

\clip(-4.88,-4.26) rectangle (5.02,4.6);
\draw [line width=0.2mm,domain=-4.88:5.02] plot(\x,{(-0.-0.5*\x)/1.});
\draw [line width=0.2mm] (1.,-4.26) -- (1.,4.6);
\draw [line width=0.2mm] (-1.,-4.26) -- (-1.,4.6);
\draw [line width=0.2mm,domain=-4.88:5.02] plot(\x,{(-0.--0.5*\x)/1.});
\draw [line width=0.2mm,domain=-4.88:5.02] plot(\x,{(-1.-0.*\x)/1.});
\draw [line width=0.2mm,domain=-4.88:5.02] plot(\x,{(-1.-0.*\x)/-1.});
\draw [line width=0.2mm,color=blue,domain=-4.88:5.02] plot(\x,{(-0.-1.*\x)/-1.});
\draw [line width=0.2mm,color=blue,domain=-4.88:5.02] plot(\x,{(-0.-1.*\x)/1.});
\draw [line width=0.2mm,color=blue,domain=-4.88:5.02] plot(\x,{(-0.5-0.*\x)/1.});
\draw [line width=0.2mm,color=blue] (-2.,-4.26) -- (-2.,4.6);
\draw [line width=0.2mm,color=blue,domain=-4.88:5.02] plot(\x,{(--0.5-0.*\x)/1.});
\draw [line width=0.2mm,color=blue] (2.,-4.26) -- (2.,4.6);

\end{tikzpicture}

\end{center} 

\caption{\label{fig:The-povera-arr}An unassuming arrangement and its image
by $\protect\ldt$}
\end{figure}

It is also interesting to consider the dual configuration $\cD(\cC_{0})$.
The arrangement $\check{\cC_{0}}=\cL_{2}(\cD(\cC_{0}))$, obtained
by taking the union of all lines through the six points of $\cD(\cC_{0})$
contains $15$ lines, and has singularities
\[
t_{2}=27,\,t_{3}=6,\,t_{5}=6.
\]
The arrangement $\check{\cC_{1}}=\cL_{2}(\cD(\cC_{1}))$ has the same
properties, moreover the following relation holds
\[
\check{\cC_{1}}=\L_{\{3\},\{2\}}(\check{\cC_{0}}).
\]
For $k\in\{0,1\}$, the six $5$-points of $\check{\cC_{k}}$ correspond
by duality to the six lines of $\cC_{k}$, and the $5$-points of
$\check{\cC_{1}}$ are the $3$-points of the arrangement $\check{\cC_{0}}$.
In Figure \ref{fig:The-dual-configuration}, the points $p_{1},\dots,p_{6}$
of $\cD(\cC_{0})$ are in black (these are the $5$-points of $\check{\cC_{0}}$),
the triple points of $\check{\cC_{0}}$ are in red (these are the
$5$-points of $\check{\cC_{1}}$), the $15$ black lines are the
lines of $\check{\cC_{0}}$ (here for some fixed value $\nt$).

The matrix $M_{\nt}$ in \eqref{eq:Matrix-Mt} giving the normals
of the six lines attributes implicitly a marking, i.e., an order to
the six lines of the arrangement $\cC_{0}=\cC_{0}(\nt)$ and the parameter
space $\mathfrak{B}_{6}\simeq\PP^{1}$ is a moduli space for marked
line arrangements $(\cC_{0}(\nt),\ell_{1},\dots,\ell_{6})$. In the
next section we describe the forgetful map $(\cC_{0}(\nt),\ell_{1},\dots,\ell_{6})\to\cC_{0}(\nt)$,
which takes the quotient of $\kBS$ by the permutation group $S_{6}$
of six elements. 

\subsection{The moduli space of unassuming arrangements without marking}

Let $(\cC_{0}(\nt),\ell_{1},\dots,\ell_{6})\in\mathfrak{B}_{6}$.
Consider another order $\ell_{\s(1)},\dots,\ell_{\s(6)}$, for $\s\in S_{6}$
a permutation. Since the lines $\ell_{i}$ are in general position,
the lines $\ell_{\s(1)},\dots,\ell_{\s(4)}$ form a basis in the projective
space, and by using an appropriate base-change, the normal of the
lines $\ell_{\s(1)},\dots,\ell_{\s(6)}$ are the six columns of the
matrix
\[
M_{\s}(\nt)=\left(\begin{array}{c}
\begin{array}{cccccc}
1 & 0 & 0 & 1 & \a_{1,5} & \a_{1,6}\\
0 & 1 & 0 & 1 & \a_{2,5} & \a_{2,6}\\
0 & 0 & 1 & 1 & \a_{3,5} & \a_{3,6}
\end{array}\end{array}\right)
\]
where the $\a_{i,j}$ are fractions in $\nt$. One can then define
a morphism 
\[
\psi_{\s}:\nt\in\PP^{1}\to((\a_{1,5}:\a_{2,5}:\a_{3,5}),(\a_{1,6}:\a_{2,6}:\a_{3,6}))\in\PP^{2}\times\PP^{2}.
\]
One computes that the images of $\PP^{1}=\PP^{1}(\nt)$ by these maps
are $15$ curves in $\PP^{2}\times\PP^{2}$, among which there is
a unique curve with bi-degree $(1,1)$, which we denote by $L_{0}$.
There is a sub-group of $48$ permutations $\s\in S_{6}$ such that
the image of $\psi_{\s}$ is the line $L_{0}$. For these $48$ permutations,
the matrix $M_{\s}(\nt)$ is equal to either
\[
M(\nt),M(-\nt),M(\tfrac{1}{\nt})\text{ or }M(-\tfrac{1}{\nt}).
\]
Let $G_{4}$ be the order $4$ automorphism group of $\PP^{1}$ generated
by the involutions $\s_{1}:\nt\to-\nt$ and $\s_{2}:\nt\to1/\nt$.
The degree $4$ map 
\begin{equation}
\varUpsilon:\PP^{1}\to\PP^{1},\,\,\nt\to\tfrac{1}{2}(\nt^{2}+\nt^{-2})\label{eq:gamma-1}
\end{equation}
is invariant by the group $G_{4}$ and therefore realizes the quotient
of $\PP^{1}$ by $G_{4}$. The quotient of $S_{6}$ by the order $48$
group has $15$ classes, which classes correspond to the $15$ curves
contained in $\PP^{2}\times\PP^{2}$. Thus the map $\varUpsilon:\PP^{1}\to\PP^{1}$
is also the quotient map $\phi:\kBS\to\kBS/S_{6}$; we call it the
period map. The images by $\varUpsilon$ of $\infty,-1,0,1$ are
respectively $\infty,1,\infty,1$. 
\begin{defn}
The operator $\L_{\{2\},\{3\}}$ has a natural action on the moduli
space $\kBS/S_{6}=\PP^{1}(z)$: we denote by $F_{\L}:\PP^{1}\to\PP^{1}$
the map obtained by sending the point $z\in\kBS/S_{6}$ representing
the unassuming arrangement $\cC_{0}(z)$, to the point $F_{\L}(z)\in\kBS/S_{6}$
representing the line arrangement $\cC_{1}=\L_{\{2\},\{3\}}(\cC_{0})$.
\end{defn}

\begin{thm}
\label{Thm-action-of-FL}The action of the operator $\L_{\{2\},\{3\}}$
on the moduli space $\kBS/S_{6}=\PP^{1}(z)$ is given by the polynomial
map
\[
F_{\L}:z\to2z^{2}-1.
\]
\end{thm}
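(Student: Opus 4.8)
The plan is to reduce the statement to an explicit semiconjugacy computation. By Corollary~\ref{cor:For--such}, the operator $\ldt$ acts on the \emph{labelled} moduli space $\kBS\simeq\PP^1(\nt)$ through the rational map $\tilde{F}\colon\nt\mapsto-\tfrac{1}{\nt^2}$, since $\ldt(\cC_0(\nt))=\cC_0(-\tfrac{1}{\nt^2})$. On the other hand, the forgetful map to the moduli space of unmarked arrangements $\kBS/S_6=\PP^1(z)$ is realized by the period map $\varUpsilon\colon\nt\mapsto\tfrac12(\nt^2+\nt^{-2})$ of \eqref{eq:gamma-1}, which is the quotient of $\PP^1$ by the group $G_4=\langle\s_1,\s_2\rangle$. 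Writing $z=\varUpsilon(\nt)$, the induced map $F_\L$ is by definition the unique map making the corresponding square commute, i.e.\ the one satisfying $\varUpsilon\circ\tilde F=F_\L\circ\varUpsilon$; so it suffices to produce this intertwining identity and read off $F_\L$.

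First I would check that $\tilde F$ genuinely descends to the quotient, so that $F_\L$ is well defined as a single-valued map on $\PP^1(z)$. This amounts to verifying that $\tilde F$ carries $G_4$-orbits to $G_4$-orbits, which it is enough to test on the two generators. For $\s_1\colon\nt\mapsto-\nt$ one has $\tilde F(-\nt)=-\tfrac{1}{\nt^2}=\tilde F(\nt)$, and for $\s_2\colon\nt\mapsto1/\nt$ one has $\tilde F(1/\nt)=-\nt^2=\s_2\bigl(\tilde F(\nt)\bigr)$; hence $\tilde F$ normalizes the $G_4$-action and descends to the quotient.

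Then the core of the proof is the substitution. Setting $z=\tfrac12(\nt^2+\nt^{-2})$ and applying $\varUpsilon$ to the image $\tilde F(\nt)=-\nt^{-2}$ gives
\[
\varUpsilon\bigl(\tilde F(\nt)\bigr)=\tfrac12\bigl(\nt^{-4}+\nt^{4}\bigr),
\]
while a direct expansion yields
\[
2z^2-1=2\cdot\tfrac14\bigl(\nt^2+\nt^{-2}\bigr)^2-1=\tfrac12\bigl(\nt^4+\nt^{-4}\bigr).
\]
The two right-hand sides coincide, so $\varUpsilon\circ\tilde F=(2z^2-1)\circ\varUpsilon$, and since $\varUpsilon$ is dominant the induced map on the quotient is exactly $F_\L\colon z\mapsto2z^2-1$. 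Equivalently, writing $\nt=e^{i\theta}$ gives $z=\cos2\theta$, while $\tilde F$ sends $\theta$ to $\pi-2\theta$, so $z\mapsto\cos4\theta$, which exhibits the Chebyshev doubling directly.

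The computation itself is routine; the only point requiring genuine care is the well-definedness in the second paragraph, namely that the labelled map $\tilde F$ respects the identification $\varUpsilon$ of markings. Granting the structural facts already established---Corollary~\ref{cor:For--such} for the labelled action and the identification of $\varUpsilon$ as the quotient by $G_4$---no further obstacle arises, and the theorem follows.
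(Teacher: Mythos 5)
Your proof is correct and takes essentially the same route as the paper: the paper likewise composes the labelled action $\nt\mapsto-\tfrac{1}{\nt^{2}}$ (from Equation \eqref{eq:cC_1-in-B6}, i.e.\ Corollary \ref{cor:For--such}) with the period map $\varUpsilon$ of \eqref{eq:gamma-1} and verifies the identity $\varUpsilon\bigl(-\tfrac{1}{\nt^{2}}\bigr)=\tfrac{1}{2}(\nt^{4}+\nt^{-4})=2\varUpsilon(\nt)^{2}-1$. Your extra check that $\nt\mapsto-\tfrac{1}{\nt^{2}}$ descends to the $G_{4}$-quotient is left implicit in the paper (where well-definedness follows from the fact that $\L_{\{2\},\{3\}}$ commutes with projective transformations), but this is a harmless refinement, not a different argument.
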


\begin{proof}
From Equation \eqref{eq:cC_1-in-B6}, there exists a projective transformation
$\text{P}$ such that $\text{P}(\cC_{1}(\nt))$ is the labelled arrangement
$\cC_{0}(\tfrac{-1}{\nt^{2}})$. The image by the quotient map $\phi$
of the labelled arrangement $\text{P}(\cC_{1}(\nt))$ is the point
\[
\phi(\cC_{0}(\tfrac{-1}{\nt^{2}}))=\varUpsilon(\tfrac{-1}{\nt^{2}})=\tfrac{1}{2}(\nt^{4}+\nt^{-4})=2\left(\tfrac{1}{2}(\nt^{2}+\nt^{-2})\right)^{2}-1=F_{\L}(\tfrac{1}{2}(\nt^{2}+\nt^{-2})),
\]
thus the result.
\end{proof}
The moduli space $\kBS/S_{6}$ is a moduli space of line arrangement
up to projective transformation: there is a projective transformations
sending an unassuming arrangement $\cC$ to $\cC'$ if and only if
the images of $\cC$ and $\cC'$ in $\kBS/S_{6}$ are equal. That
has the consequence:
\begin{cor}
There exists unassuming arrangements $\cC_{0}$ such that the orbit
$(\cC_{k})_{k\in\NN}$ of $\cC_{0}$ by $\L_{\{2\},\{3\}}$ is infinite,
and there are no projective transformations between $\cC_{k},\cC_{k'}$
for any $k\neq k'$.
\end{cor}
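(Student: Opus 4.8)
The plan is to transport the statement entirely into the dynamics of the Tchebychev map $F_{\L}(z)=2z^{2}-1$ on the coarse moduli space $\kBS/S_{6}=\PP^{1}(z)$. By the moduli interpretation recalled just above the corollary, two unassuming arrangements are projectively equivalent exactly when they have the same image in $\PP^{1}(z)$, and by Theorem~\ref{Thm-action-of-FL} the operator $\ldt$ acts on $\PP^{1}(z)$ through $F_{\L}$. Hence, writing $z_{0}\in\PP^{1}(z)$ for the modulus of $\cC_{0}$, the modulus of $\cC_{k}$ is $F_{\L}^{k}(z_{0})$, and the two desired properties --- that the orbit $(\cC_{k})_{k\in\NN}$ is infinite and that no two of its members are projectively equivalent --- become together the single assertion that the forward orbit $\{F_{\L}^{k}(z_{0})\}_{k\geq0}$ consists of pairwise distinct points. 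One bookkeeping point must be kept in mind: for $(\cC_{k})_{k\in\NN}$ to be a genuine orbit of unassuming arrangements, every iterate must avoid the finite \emph{degenerate locus} $B\subset\PP^{1}(z)$, namely the image under the period map $\varUpsilon$ of the excluded set $S=\{\infty,-1,0,1,\pm2\pm\sqrt{5}\}$, outside of which $\cC_{0}(z)$ is unassuming and Corollary~\ref{cor:For--such} applies.

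Both requirements are settled by one countability argument. Since $F_{\L}$ is a self-map of $\PP^{1}(\CC)$ of degree $2\geq 2$, for every $n$ the equation $F_{\L}^{n}(z)=z$ has only finitely many solutions; thus the set of periodic points is countable, and the set $\mathrm{PrePer}(F_{\L})=\bigcup_{m\geq0}F_{\L}^{-m}(\{\text{periodic points}\})$ of preperiodic points is a countable union of finite sets, hence countable. A starting point $z_{0}$ has infinite forward orbit precisely when it is not preperiodic, and in that case its iterates are automatically pairwise distinct: a coincidence $F_{\L}^{k}(z_{0})=F_{\L}^{k'}(z_{0})$ with $k<k'$ would make $z_{0}$ preperiodic. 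Likewise the set of starting points whose orbit ever meets the finite locus $B$ is $\bigcup_{k\geq0}F_{\L}^{-k}(B)$, again a countable union of finite sets.

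I would then conclude by a cardinality count. The union of the countable set $\mathrm{PrePer}(F_{\L})$ with the countable set $\bigcup_{k\geq0}F_{\L}^{-k}(B)$ is countable, whereas $\PP^{1}(\CC)$ is uncountable; so there exist (in fact uncountably many) points $z_{0}\in\PP^{1}(\CC)$ outside this union. For such a $z_{0}$ the arrangement $\cC_{0}=\cC_{0}(z_{0})$ is unassuming, every iterate $\cC_{k}$ is again unassuming because $F_{\L}^{k}(z_{0})\notin B$, the orbit is infinite because $z_{0}$ is not preperiodic, and distinct moduli $F_{\L}^{k}(z_{0})\neq F_{\L}^{k'}(z_{0})$ force $\cC_{k}\not\simeq\cC_{k'}$ for $k\neq k'$. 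The only genuinely delicate point is this bookkeeping --- ensuring the whole forward orbit stays in the unassuming locus --- and it is exactly what the countability of $\bigcup_{k}F_{\L}^{-k}(B)$ handles; the dynamical input (countability of the preperiodic points of a degree $\geq2$ rational map) is standard.

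Finally, one can make the existence effective rather than rely on a cardinality count. Iterating the $\nt$-level description $\cC_{0}(\nt)\mapsto\cC_{0}(-1/\nt^{2})$ of Corollary~\ref{cor:For--such}, the successive parameters $\nt_{k}$ satisfy $\log|\nt_{k}|=(-2)^{k}\log|\nt_{0}|$, so the $G_{4}$-invariant quantity $|\log|\nt_{k}||=2^{k}\,|\log|\nt_{0}||$ is strictly increasing in $k$ whenever $|\nt_{0}|\neq1$. Hence for any real $\nt_{0}>1$ avoiding the countably many values for which some $\nt_{k}$ falls in $S$, the parameters $\nt_{k}$ lie in pairwise distinct $G_{4}=\langle\nt\to-\nt,\ \nt\to1/\nt\rangle$-orbits, giving an explicit infinite family of pairwise non-projectively-equivalent arrangements and recovering the first assertion of Theorem~\ref{thm:Main2}.
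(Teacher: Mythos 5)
Your proposal is correct. The core reduction --- identifying projective equivalence classes of unassuming arrangements with points of $\kBS/S_{6}=\PP^{1}(z)$ and letting $\ldt$ act through $F_{\L}(z)=2z^{2}-1$ --- is exactly the framework the paper establishes just before the corollary, and both proofs use it; where you diverge is in how the starting modulus is selected. The paper's proof is a single sentence: take $\nt$ such that $F_{\L}^{\circ n}(\nt)$ tends to $\infty$; such an escaping orbit (for instance a real modulus $>1$, where $F_{\L}$ is strictly increasing) is automatically infinite with pairwise distinct points, since an orbit with a repetition is eventually periodic and cannot tend to the fixed point $\infty$ without reaching it. You instead select $z_{0}$ by a cardinality argument: the preperiodic points of a degree-two map and the backward orbit $\bigcup_{k}F_{\L}^{-k}(B)$ of the finite degenerate locus $B=\varUpsilon(S)$ are countable, while $\PP^{1}(\CC)$ is not. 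Your route buys two things the paper's terse proof leaves implicit: it yields uncountably many valid starting points, and --- more importantly --- it handles the bookkeeping that every iterate must remain in the unassuming locus. This is a genuine point: the paper's escaping orbit can in fact hit $\varUpsilon(\pm2\pm\sqrt{5})=161$ (e.g.\ $z_{0}=9$ gives $F_{\L}(9)=161$), at which step the arrangement degenerates and $\ldt$ no longer returns six lines, so a genericity caveat like yours is needed to make the paper's argument airtight. Conversely, the paper's choice is more concrete, and your closing ``effective'' paragraph, with $\log|\nt_{k}|=(-2)^{k}\log|\nt_{0}|$ and the observation that $G_{4}$-orbits preserve $\bigl|\log|\nt|\bigr|$, recovers that explicit flavour at the parameter level and simultaneously re-proves the first assertion of Theorem~\ref{thm:Main2}.
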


Indeed, for $\cC_{0}=\co(\nt)$, one just have to take $\nt\in\PP^{1}$
such that the sequence $F_{\L}^{\circ n}(\nt)$ tends to $\infty$.
\begin{table}

\caption{}

\end{table}

\subsection{The coordinates of the iterations of unassuming arrangements}

In this section, in order to understand the dynamics of the operator
$\L_{\{2\},\{3\}}$ on unassuming arrangements, not up to projective
automorphism of $\PP^{2}$ but in the plane, we would like to know
what are the equations of the six lines of the successive images $\cC_{k},k=1,2,...$
of an unassuming arrangement $\cC_{0}$ by $\L_{\{2\},\{3\}}$. That
will be used in Section \ref{subsec:Periodic-points}, where we study
periodic and pre-periodic unassuming arrangements for the action of
$\L_{\{2\},\{3\}}$.

Consider the arrangement of lines $\cC(a,b,c)$ whose normals are
the columns of the following matrix
\begin{equation}
\left(\begin{array}{cccccc}
0 & 0 & b & -b & c & -c\\
a & -a & 0 & 0 & 1 & 1\\
1 & 1 & 1 & 1 & 0 & 0
\end{array}\right).\label{eq:abc}
\end{equation}
For $a,b,c$ generic, it is projectively equivalent to $\cC_{0}(\frac{ac}{b})$,
and therefore it is an unassuming arrangement. One computes that the
image of $\cC(a,b,c)$ by $\ldt$ is the unassuming arrangement $\cC(\frac{b}{c},ac,\frac{b}{a})$. 

Rather than computing the coordinates of the lines of the sequence
associated to $\co$, let us compute those associated to the arrangement
$\tilde{\co}$, where 
\begin{equation}
\tilde{\co}=\cC(1,1,\nt),\label{eq:NewEqPov}
\end{equation}
since that choice greatly simplify the formulas and computations. 

Let us define the integer sequence $(\nu_{k})_{k\geq0}$ by $\nu_{k}=\frac{1}{3}(2^{k}-(-1)^{k})$
for $k\in\NN$. That sequence may also be inductively defined by 

\[
\nu_{0}=0,\,\nu_{k+1}=2\nu_{k}+(-1)^{k+1}
\]
It is the OEIS sequence $A001045$ and it begins with $0,1,1,3,5,11,21,43...$
Let us also define the sequences $(a_{k})_{k\geq0},(b_{k})_{k\geq0},(c_{k})_{k\geq0}$
by 
\[
a_{k}=\nt^{\nu_{k}(-1)^{k}},\,\,b_{k}=\nt^{\nu_{k}(-1)^{k+1}}=\frac{1}{a_{k}},\,\,c_{k}=\nt^{\nu_{k}(-1)^{k}+1}=\nt a_{k}.
\]
Let $(\tilde{\cC_{k}})$ be the sequence defined by $\tilde{\co}=\cC(1,1,\nt)$
and $\tilde{\cC}_{k+1}=\ldt(\tilde{\cC}_{k})$. 
\begin{thm}
\label{thm:Formula-ABC}We have $\tilde{\cC}_{k}=\cC(a_{k},b_{k},c_{k})$.
\end{thm}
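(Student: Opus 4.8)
The plan is a straightforward induction on $k$, with the formula $\ldt(\cC(a,b,c)) = \cC(\tfrac{b}{c}, ac, \tfrac{b}{a})$ established just above as the only input. For the base case $k=0$, the value $\nu_0 = 0$ gives $a_0 = \nt^0 = 1$, $b_0 = 1/a_0 = 1$ and $c_0 = \nt a_0 = \nt$, so that $\cC(a_0,b_0,c_0) = \cC(1,1,\nt) = \tilde{\co}$, as required. Assuming $\tilde{\cC}_k = \cC(a_k,b_k,c_k)$, and noting that for generic $\nt$ each iterate stays unassuming so that the image formula applies at every step, applying $\ldt$ yields
\[
\tilde{\cC}_{k+1} = \ldt\bigl(\cC(a_k,b_k,c_k)\bigr) = \cC\!\left(\tfrac{b_k}{c_k},\, a_k c_k,\, \tfrac{b_k}{a_k}\right),
\]
so the whole statement reduces to the three scalar identities $a_{k+1} = b_k/c_k$, $b_{k+1} = a_k c_k$ and $c_{k+1} = b_k/a_k$.

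To verify these I would work additively with the exponents. Writing $e_k = (-1)^k \nu_k$ for the exponent in $a_k = \nt^{e_k}$, the definitions give $b_k = \nt^{-e_k}$ and $c_k = \nt^{e_k+1}$. Consequently the three right-hand sides above are the powers
\[
\frac{b_k}{c_k} = \nt^{-2e_k-1}, \qquad a_k c_k = \nt^{2e_k+1}, \qquad \frac{b_k}{a_k} = \nt^{-2e_k},
\]
while the three left-hand sides are $a_{k+1} = \nt^{e_{k+1}}$, $b_{k+1} = \nt^{-e_{k+1}}$ and $c_{k+1} = \nt^{e_{k+1}+1}$. Comparing exponents, all three identities collapse to the single relation $e_{k+1} = -2e_k - 1$.

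It remains to read off $e_{k+1} = -2e_k - 1$ from the recurrence defining $(\nu_k)$. Using the two-step form $\nu_{2j+1} = 2\nu_{2j}+1$ and $\nu_{2j} = 2\nu_{2j-1}-1$ (equivalently $\nu_{k+1} = 2\nu_k + (-1)^k$, the form consistent with the initial terms $0,1,1,3,5,11,\dots$), one substitutes $\nu_k = (-1)^k e_k$ and treats the two parities: for $k=2j$ one gets $e_{2j+1} = -\nu_{2j+1} = -(2\nu_{2j}+1) = -2e_{2j}-1$, and for $k = 2j-1$ one gets $e_{2j} = \nu_{2j} = 2\nu_{2j-1}-1 = -2e_{2j-1}-1$, both of which are exactly $e_{k+1} = -2e_k-1$. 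This closes the induction. I do not expect any genuine obstacle here: the argument is elementary, and the only point demanding care is the parity bookkeeping of the signs $(-1)^k$ in the exponents, since a single sign slip would desynchronise the paired families $a_k = \nt^{e_k}$ and $b_k = \nt^{-e_k}$. As a safeguard I would confirm the first two iterates by direct computation, $\tilde{\cC}_1 = \cC(1/\nt,\nt,1)$ and $\tilde{\cC}_2 = \cC(\nt,1/\nt,\nt^2)$, before trusting the closed form.
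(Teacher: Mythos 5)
Your proof is correct and follows essentially the same route as the paper's: induction on $k$, with the formula $\ldt(\cC(a,b,c))=\cC(\tfrac{b}{c},ac,\tfrac{b}{a})$ as the only input, and a verification of the three exponent identities via the recurrence for $(\nu_k)$ — your substitution $e_k=(-1)^k\nu_k$, which collapses the three checks into the single relation $e_{k+1}=-2e_k-1$, is just a tidier bookkeeping of the same computation the paper does with the signs $(-1)^k$ left explicit. You were also right to trust the recurrence $\nu_{k+1}=2\nu_k+(-1)^{k}$ consistent with the initial terms $0,1,1,3,5,11,\dots$ (the inline form $\nu_{k+1}=2\nu_k+(-1)^{k+1}$ stated in that section of the paper is a sign typo).
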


\begin{proof}
This is true when $k=0$. Let $k\in\NN$ be an integer, and suppose
that this is true for $k$. One has 
\[
\begin{array}{c}
\frac{b_{k}}{c_{k}}=\frac{\nt^{\nu_{k}(-1)^{k+1}}}{\nt^{\nu_{k}(-1)^{k}+1}}=\nt^{(-1)^{k+1}(2\nu_{k}+(-1)^{k})}=a_{k+1},\\
a_{k}c_{k}=\nt^{2\nu_{k}(-1)^{k}+1}=\nt^{(-1)^{k}\nu_{k+1}}=b_{k+1},\\
\frac{b_{k}}{a_{k}}=\frac{\nt^{\nu_{k}(-1)^{k+1}}}{\nt^{\nu_{k}(-1)^{k}}}=\nt^{(-1)^{k+1}(2\nu_{k}+(-1)^{k}-(-1)^{k})}=\nt^{(-1)^{k+1}\nu_{k+1}+1}=c_{k+1},
\end{array}
\]
thus the result.
\end{proof}
\begin{rem}
When $|\nt|\neq1$, the sequence of unassuming arrangements $(\tilde{\cC}_{k})$
tends to $x^{2}y{}^{2}z{}^{2}=0.$ The sequence of the $15$ double
points of $(\tilde{\cC}_{k})$ tends to the set of three points $\{(1:0:0),(0:1:0),(0:0:1)\}$.
\\
Working modulo projective automorphism, these $15$ points tend to
a set $P_{9}$ of $9$ points (given by the columns of the $3\times15$
matrix in Section \eqref{subsec:Images of six lines}, with $\nt$
replaced by $0$). The dual arrangement $\cD(P_{9})$ is the simplicial
line arrangement $A(9,48)$ (see \cite{Ctz} for the notations for
simplicial arrangements); it is fixed by $\L_{\{3\},\{2\}}$.  The
line arrangement $\mathcal{L}_{\{2\}}(P_{9})$ is the simplicial arrangement
$A(13,96)_{3}$.
\end{rem}

\begin{defn}
\label{def:Triangle}Let $\cC$ be an unassuming arrangement. Then
the line arrangement $\cC\cup\cC'$ (where $\cC'=\ldt(\cC)$) has
exactly three $4$-points $p_{1},p_{2},p_{3}$. By induction, these
points are common to all iterates of $\cC$ under $\ldt$; we call
these three points the \textit{base points} of $\cC$, and we call
the lines dual to these points \textit{the triangle of} $\cC$. The
triangle of $\cC$ is contained in the dual line arrangement $\check{\cC}=\cL_{2}(\cD(\cC))$. 
\end{defn}

An unassuming arrangement with base points $\mathcal{T}_{3}=\{(1:0:0),(0:1:0),(0:0:1)\}$
is projectively equivalent under a projective map $g$ to some arrangement
$\tilde{\cC_{0}}=\cC(1,1,t)$, which has the same base points. The
maps $g$ must preserves the $3$ base points and therefore is the
product of a diagonal map $(x:y:z)\to(ux:vy:wz)$ (for some $u,v,w$)
and a permutation of the coordinates (which permutation has no effect
on the unassuming arrangement). Thus we obtain that
\begin{lem}
The unassuming arrangements $\cC(a,b,c)$ in \eqref{eq:abc} are all
unassuming arrangements with base points in $\mathcal{T}_{3}$. 
\end{lem}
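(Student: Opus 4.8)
The plan is to verify the two claims separately. The statement asserts (i) that every arrangement $\cC(a,b,c)$ of \eqref{eq:abc} has base points in $\mathcal{T}_3=\{(1:0:0),(0:1:0),(0:0:1)\}$, and (ii) that, conversely, every unassuming arrangement whose base points are $\mathcal{T}_3$ arises as some $\cC(a,b,c)$. The text preceding the lemma already does the heavy lifting for (ii): an arbitrary such arrangement is projectively equivalent, via a map $g$ fixing the three coordinate points, to some $\tilde{\cC_0}=\cC(1,1,t)$, and $g$ is the product of a diagonal transformation $(x:y:z)\to(ux:vy:wz)$ and a coordinate permutation. So for (ii) it suffices to observe that applying such a $g$ to $\cC(1,1,t)$ produces an arrangement of the form $\cC(a,b,c)$.

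First I would establish the base-point claim (i) directly from the matrix \eqref{eq:abc}. The six normals come in the three pairs $\{(0,a,1),(0,-a,1)\}$, $\{(b,0,1),(-b,0,1)\}$, $\{(c,1,0),(-c,1,0)\}$. I compute the intersection points of the six lines and identify the three $4$-points that, by Definition \ref{def:Triangle}, are the base points. A short calculation shows that lines $1,2$ (normals $(0,\pm a,1)$) both pass through $(1:0:0)$, lines $3,4$ (normals $(\pm b,0,1)$) both pass through $(0:1:0)$, and lines $5,6$ (normals $(\pm c,1,0)$) both pass through $(0:0:1)$; moreover at each coordinate point two further lines meet, giving the three $4$-points. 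Thus the base points of $\cC(a,b,c)$ are exactly $\mathcal{T}_3$, which is (i).

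For (ii) I would record explicitly how a diagonal map acts on the $\cC$-family. A diagonal transformation sends the line with normal $(n_1,n_2,n_3)$ to the line with normal $(n_1/u, n_2/v, n_3/w)$; applied to the columns of the $\cC(1,1,t)$ matrix this again yields a matrix whose columns pair up into the three sign-paired shapes of \eqref{eq:abc}, so the result is $\cC(a',b',c')$ for suitable $a',b',c'$ read off from $u,v,w$. Coordinate permutations only reorder the three pairs and have no effect on the arrangement, as noted in the text. Combining with the preceding reduction, every unassuming arrangement with base points $\mathcal{T}_3$ is of the form $\cC(a,b,c)$, completing (ii).

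The main obstacle is bookkeeping rather than conceptual: I must confirm that the diagonal map does not collapse the sign-paired structure (e.g. that it never forces $a=b=c$ or introduces degeneracy), so that the image genuinely lies in the $\cC(a,b,c)$-family for generic parameters, and that the identification of which two coordinate points are the $4$-points is consistent with Definition \ref{def:Triangle}. Care is also needed because $\mathcal{T}_3$ could a priori be realized with a different pairing of lines to vertices under a permutation; but since such a permutation leaves the arrangement unchanged, this does not affect the conclusion.
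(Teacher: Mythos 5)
Your proof is correct and takes essentially the same route as the paper, whose entire argument is the paragraph preceding the lemma: reduce an arbitrary unassuming arrangement with base points $\mathcal{T}_3$ to $\cC(1,1,t)$ by a projectivity, note that this projectivity must preserve the three base points and hence is a diagonal map composed with a coordinate permutation, and conclude that the image stays in the family $\cC(a,b,c)$. Your additions (the explicit check that $\cC(a,b,c)$ has base points $\mathcal{T}_3$ via the paired normals and the computation of the diagonal action on normals) merely fill in details the paper leaves implicit; the only caveat is that, like the paper, you overstate that coordinate permutations have ``no effect''---a permutation can send $\cC(a,b,c)$ to, e.g., $\cC(b,a,1/c)$---but since the family is preserved under such permutations, the argument is unaffected.
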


\subsubsection{\label{subsec:The-cross-ratio}Cross ratio }

The triangle (see Definition \ref{def:Triangle}) of the unassuming
line arrangements $\cC(1,1,\nt)$ is the union of the lines $L_{1}:x=0$,
$L_{2}:y=0$, $L_{3}:z=0$. We define a parametrization of these lines
by $a\to(0:a:1),$ $b\to(b:0:1)$, $c\to(c:1:0)$. We remark that
if one defines sequences $A_{k},B_{k},C_{k}$ of pairs of points in
$L_{1},L_{2},L_{3}$ by respectively $A_{k}=\{a_{k},-a_{k}\}$, $B_{k}=\{b_{k},-b_{k}\}$
and $C_{k}=\{c_{k},-c_{k}\}$, where we recall that 
\[
a_{k}=t^{\nu_{k}(-1)^{k}},\,\,b_{k}=t^{\nu_{k}(-1)^{k+1}}=\frac{1}{a_{k}},\,\,c_{k}=t^{\nu_{k}(-1)^{k}+1}=ta_{k},
\]
we obtain the relations 

\begin{equation}
\begin{array}{c}
\cros(A_{k-1};A_{k})=\cros(B_{k-1};B_{k})=\cros(C_{k-1};C_{k})=\{\left(\frac{t^{2^{k}}-1}{t^{2^{k}}+1}\right)^{\pm2}\}\end{array}.\label{eq:Cross-1}
\end{equation}
the points in $A_{k},B_{k},C_{k}$ seen as elements in $L_{1},L_{2},L_{3}$
defines the normals of the lines of $\tilde{\cC}_{k}$. 

\section{\label{sec:dynamics-and-Applications}dynamics and Applications }

\subsection{\label{subsec:On-the-dynamic}On the dynamics of $\chi(z)=-\tfrac{1}{z^{2}}$
and $F_{\protect\L}=2z^{2}-1$}

For basic notions on dynamic we refer to the books \cite{Beardon}
and \cite{Silverman}.

\subsubsection{\label{subsec:The-semi-conjugacy-between}The semi-conjugacy between
$\chi$ and $F_{\protect\L}$}

Let us also recall that the rational function $F_{\L}:z\to2z^{2}-1$
gives the action of $\L_{\{2\},\{3\}}$ on the moduli space $\kBS/S_{6}=\PP^{1}$
of unassuming arrangements. We obtained $F_{\L}$ by making the following
diagram to commute 
\[
\begin{array}{ccc}
\cC_{0}(t)\in\kBS & \stackrel{}{\to} & \text{P}(\cC_{1}(t))\in\kBS\\
\downarrow &  & \downarrow\\
\kBS/S_{6} & \stackrel{F_{\L}}{\to} & \kBS/S_{6}
\end{array}
\]
Here $\text{P}(\cC_{1}(\nt))$ the image of the line arrangement $\cC_{1}(\nt)=\L_{\{2\},\{3\}}(\cC_{0}(\nt))$
by the projective transformation sending the first four lines to the
canonical basis, so that it is in $\kBS$. The horizontal map is
given in the period map (see \eqref{eq:gamma-1}) $\varUpsilon:\nt\to\tfrac{1}{2}(\nt^{2}+\nt^{-2})$
and the above diagram is 
\[
\begin{array}{ccc}
\PP^{1} & \stackrel{\chi}{\to} & \PP^{1}\\
\varUpsilon\downarrow &  & \varUpsilon\downarrow\\
\PP^{1} & \stackrel{F_{\L}}{\to} & \PP^{1}.
\end{array}
\]
It is commutative so that one has
\begin{equation}
F_{\L}\circ\varUpsilon(\nt)=\varUpsilon\circ\chi(\nt),\label{eq:F_L-and-chi}
\end{equation}
in other words: the map $\Upsilon$ is a semi-conjugacy between $\chi$
and $F_{\L}$. A consequence is that if $\nt$ is a periodic point
of $\chi$, then it is a periodic point of $F_{\L}$ (we give below
an example for which the period of $F_{\L}(\nt)$ is $2$ for $F_{\L}$,
but the period of $\nt$ for $\chi$ is $4$).  

\subsubsection{\label{subsec:Julia-sets-of}Julia sets of $\chi:z\to-\tfrac{1}{z^{2}}$
and $F_{\protect\L}:z\to2z^{2}-1$}

{} The orbit of $0$ under $\chi$ is $\{0,\infty\}$. Consider $z=\rho e^{i\t}\in\CC$,
$z\neq0$ with $\rho\in\RR^{+}$, $\t\in\RR$. If $\rho\neq1$, the
sequence $\chi^{2n}(z)$ (resp. $\chi^{2n+}(z)$) converges to $0$
(resp. $\infty$) if $\rho<1$ and to $\infty$ (resp. $0$) if $\rho>1$.
The Julia set of $\chi$ is the unit circle. 

The polynomial $F_{\L}$ is the degree two Tchebychev polynomial,
one has: 
\[
F_{\L}(\cos(\t))=\cos(2\t).
\]
 The Julia set of $F_{\L}$ is the interval $[-1,1]$ (see e.g., \cite{Beardon}). 

The map $\chi$ restricted to the unit ball (identified with $\RR/2\pi\ZZ$)
is the map $\theta\to-2\t\in\RR/2\pi\ZZ$. The map $F_{\L}$ restricted
to $[-1,1]$ maps $\cos(\t)$ to $\cos^{2}(\t)-1=\cos(2\t)$. The
quotient map $\varUpsilon:z\to\tfrac{1}{2}(\nt^{2}+\nt^{-2})$ restricted
to the unit ball is the map 
\[
\theta\in\RR/2\pi\ZZ\to\cos(2\t).
\]
The image of the Julia set of $\chi$ by $\varUpsilon$ is the Julia
set of $F_{\L}$.

\subsubsection{\label{subsec:Periodic-and-pre-periodic}Periodic and pre-periodic
sequences $(\protect\cC_{k})_{k\in\protect\NN}$.}

Let us study when the sequence $\tilde{\cC_{k}}$ with first term
$\tilde{\cC}_{0}(\nt)=\cC(1,1,\nt)$ is periodic. We recall that one
has $\tilde{\cC_{k}}=\cC(a_{k},b_{k},c_{k})$ where
\[
a_{k}=\nt^{\nu_{k}(-1)^{k}},\,\,b_{k}=\nt^{\nu_{k}(-1)^{k+1}}=\frac{1}{a_{k}},\,\,c_{k}=\nt^{\nu_{k}(-1)^{k}+1}=\nt a_{k},
\]
and $\nu_{k}=\tfrac{1}{3}(2^{k}+(-1)^{k+1}).$ The sequence $(\tilde{\cC_{k}})_{k}$
is therefore periodic if and only if $(a_{k})_{k}$ is periodic. If
the sequence $(a_{k})_{k}$ is periodic with period $n$ then
\[
\nt^{\nu_{n-1}(-1)^{n-1}}=1,
\]
therefore $\nt$ is an $\nu_{n-1}$ root of unity. Conversely, if
$\nt$ is an $m$-root of unity, with $m$ dividing $\nu_{n-1}$ and
not dividing $\nu_{k}$ for $k\leq n-2$, we obtain an $n$-periodic
sequence of unassuming arrangements. 

\subsubsection{\label{subsec:The-ope-invertible}Invertibility of the operator $\protect\L_{\{2\},\{3\}}$}

Using the formula in \eqref{eq:abc}, one may search for the antecedents
of the configuration $\cC(u,v,w)$.
\begin{thm}
The operator $\L_{\{2\},\{3\}}$ has an inverse on the space of real
unassuming arrangements, and has degree two on the space of complex
unassuming arrangements. 
\end{thm}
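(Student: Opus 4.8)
The plan is to work entirely with the explicit formula from \eqref{eq:abc} together with the action of $\ldt$ computed just above it. Recall we have shown that $\ldt$ sends $\cC(a,b,c)$ to $\cC(\tfrac{b}{c},ac,\tfrac{b}{a})$. Given a target arrangement $\cC(u,v,w)$, I would try to solve the system
\[
\frac{b}{c}=u,\qquad ac=v,\qquad \frac{b}{a}=w
\]
for $(a,b,c)$ in terms of $(u,v,w)$. The first obstacle is that $(a,b,c)$ is only defined up to the rescalings that fix the arrangement $\cC(a,b,c)$ (the diagonal maps and coordinate permutations of Lemma above), so I must first pin down the genuine invariant. Since $\cC(a,b,c)$ is projectively equivalent to $\cC_0(\tfrac{ac}{b})$, the honest moduli coordinate is $t=\tfrac{ac}{b}$, and I expect the real content of the theorem to be visible through the effect of $\ldt$ on $t$.

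\textbf{Reducing to a one-variable equation.} Computing the moduli coordinate of the image, one finds that $\ldt$ sends the value $t=\tfrac{ac}{b}$ to the value $t'=\tfrac{(b/c)(b/a)}{ac}=\tfrac{b^{2}}{a^{2}c^{2}}=t^{-2}$ up to the normalization relating $\cC(a,b,c)$ to the standard form $\cC_0$; matching this against the already-established action $\cC_0(t)\mapsto\cC_0(-\tfrac{1}{t^{2}})$ of Corollary \ref{cor:For--such}, the map on the moduli parameter is $\chi:t\mapsto-\tfrac{1}{t^{2}}$. So solving for antecedents of $\cC(u,v,w)$ amounts to inverting $\chi$. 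Given a target parameter $s$, the equation $-\tfrac{1}{t^{2}}=s$ has the two solutions $t=\pm\sqrt{-1/s}$, which already shows $\ldt$ has degree two on the complex moduli space $\PP^{1}$, establishing the second assertion.

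\textbf{The real case.} For the first assertion I would argue that over $\RR$ the two complex square roots $t=\pm\sqrt{-1/s}$ are either both non-real or, when real, related by sign; the decisive point is that the sign ambiguity $t\mapsto-t$ lies in the group $G_4$ generated by $\s_1:t\to-t$ and $\s_2:t\to 1/t$, and hence does \emph{not} change the underlying unmarked arrangement (these are exactly the transformations absorbed by the period map $\varUpsilon$ in \eqref{eq:gamma-1}). Therefore among the two preimages in $\PP^{1}$ there is a unique one giving a real unassuming arrangement, which furnishes a well-defined inverse on real unassuming arrangements. The main obstacle I anticipate is bookkeeping: I must check that the chosen real antecedent actually avoids the excluded set $S$ and yields a genuine (non-degenerate) unassuming arrangement, and that the two-fold sign ambiguity is correctly quotiented by $G_4$ so that reality singles out one arrangement rather than none or two.

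\textbf{Assembling the proof.} Concretely I would: (i) record $\ldt:\cC(a,b,c)\mapsto\cC(\tfrac{b}{c},ac,\tfrac{b}{a})$ and translate it to the action $\chi:t\to-\tfrac{1}{t^{2}}$ on the invariant $t=\tfrac{ac}{b}$; (ii) solve $\chi(t)=s$ to exhibit exactly two complex solutions, giving degree two; (iii) invoke $G_4$-invariance of the unmarked arrangement to collapse the $\pm$ ambiguity and conclude that over $\RR$ the equation $-1/t^{2}=s$ with $s$ real has a unique solution producing a real arrangement, yielding a genuine inverse in the real case.
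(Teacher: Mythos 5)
Your reduction to the moduli parameter $t=\tfrac{ac}{b}$ is where the argument fails, and the failure is not repairable bookkeeping: the theorem is a statement about actual arrangements, not about points of the moduli space. By Definition \ref{def:Triangle}, any antecedent of $\cC(u,v,w)$ has the same base points $\mathcal{T}_{3}=\{(1:0:0),(0:1:0),(0:0:1)\}$, hence is itself of the form $\cC(a,b,c)$; so the object to invert is the map $(a,b,c)\mapsto(\tfrac{b}{c},ac,\tfrac{b}{a})$ on a three-parameter family, taken modulo the sign changes $(a,b,c)\mapsto(\pm a,\pm b,\pm c)$, which do not alter the arrangement. The coordinate $t$ determines $\cC(a,b,c)$ only up to projective equivalence (a two-parameter family of distinct arrangements shares the same $t$), so solving $\chi(t)=s$ neither exhibits an antecedent arrangement nor counts antecedents: the degree of a map on a threefold is not determined by the degree of the induced map on a one-dimensional quotient (compare $(x,y)\mapsto(x^{2},y^{2})$, of degree $4$, which induces $x\mapsto x^{2}$, of degree $2$, on the first coordinate). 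The paper instead solves $\tfrac{b}{c}=\pm u$, $ac=\pm v$, $\tfrac{b}{a}=\pm w$ directly (the $\pm$ signs being forced by the sign ambiguity just mentioned); the solutions are $(\pm\rho,\pm w\rho,\pm\tfrac{w}{u}\rho)$ with $\rho=\sqrt{\pm\tfrac{uv}{w}}$, the eight sign choices merely relabel the lines, and the two choices of sign under the radical give exactly two antecedent arrangements. That sign analysis, which your reduction discards, is the entire content of the count.

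The real case is where your argument produces a false statement rather than a gap. Since the parameter of an arrangement is only well defined up to the group $G_{4}$, ``the'' induced map on $t$ is only defined up to $G_{4}$: your direct computation gives $t\mapsto\tfrac{1}{t^{2}}$, the normalization via Corollary \ref{cor:For--such} gives $t\mapsto-\tfrac{1}{t^{2}}$, and the equations $\tfrac{1}{t^{2}}=s$ and $-\tfrac{1}{t^{2}}=s$ have real solutions for opposite signs of $s$ -- so ``the equation $\chi(t)=s$ has a unique real solution'' is not even a well-posed assertion. Concretely, take the real arrangement $\cC(1,1,2)$, whose parameter is $s=2$: your equation $-\tfrac{1}{t^{2}}=2$ has no real root, yet the real antecedent exists, namely $\cC(\tfrac{1}{\sqrt{2}},\sqrt{2},\sqrt{2})$, whose parameter satisfies $-\tfrac{1}{t^{2}}=-2$, a $G_{4}$-translate of $s$. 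Conversely, when $s<0$ your equation has two real roots $\pm t$, and $G_{4}$-equivalence of parameters only tells you the corresponding arrangements are projectively equivalent, not equal, so you have not singled out one antecedent there either. Passing to the unmarked moduli does not rescue this: under $F_{\L}(z)=2z^{2}-1$ every real class $z_{1}>-1$ has two real preimage classes, so uniqueness of the real antecedent is simply invisible at the moduli level; it is an arrangement-level fact, coming from the observation that exactly one of $\tfrac{uv}{w}$ and $-\tfrac{uv}{w}$ is positive, hence exactly one of $\rho_{1},\rho_{2}$ is real.
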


\begin{proof}
An arrangement $\cC_{-1}(a,b,c)$ is such that $\L_{\{2\},\{3\}}(\cC_{-1})=\cC(u,v,w)$
if and only if the parameters $a,b,c$ are such that 
\[
\begin{array}{ccc}
\frac{b}{a}=\pm u, & ac=\pm v, & \frac{b}{c}=\pm w.\end{array}
\]
Over the complex field, the solutions $(a,b,c)$ are among the triplets
\begin{equation}
(\pm\rho,\pm w\rho,\pm\frac{w}{u}\rho),\label{eq:rho}
\end{equation}
where $\rho=\sqrt{\pm\frac{uv}{w}}$. The choice of the three signs
in Equation \eqref{eq:rho} does not matter for the line arrangement
$\cC_{-1}$, since that choice only permutes the marking of the arrangement.
Thus we can suppose that $(a,b,c)$ is equal to $(\rho,w\rho,\frac{w}{u}\rho)$,
and there are two possibilities for $\rho$: either $\rho_{1}=\sqrt{\frac{uv}{w}}$
or $\rho_{2}=\sqrt{-\frac{uv}{w}}$. Over the complex numbers, one
has two antecedents. Suppose that the unassuming arrangement $\cC(u,v,w)$
is real i.e., $u,v,w\in\RR$. Then, only one of the two numbers $\rho_{1},\rho_{2}$
is real, and therefore there is a unique antecedent for a real unassuming
arrangement. %
\end{proof}

\subsection{\label{subsec:Sing-of-the-iterate}Singularities of the union of
the iterates of $\protect\ldt$}

Let $(\cC_{k})_{k\geq0}$ be the sequence of line arrangements inductively
defined by $\cC_{0}=\cC(1,1,\nt)$ for $\nt\in\PP^{1}$ and $\cC_{k+1}=\ldt(\cC_{k})$. 
\begin{prop}
For $\nt$ in a dense open set, the line arrangement $\cA_{k}=\cup_{0\leq j\leq k}\cC_{j}$
has singularities 
\[
t_{2}(\cA_{k})=12(k^{2}-k+1),\,\,t_{3}(\cA_{k})=12k,\,\,t_{2k+2}(\cA_{k})=3,
\]
and $t_{r}=0$ for $r\notin\{2,3,2k+2\}$. 
\end{prop}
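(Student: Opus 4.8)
The plan is to compute all the pairwise incidences among the lines of the $k+1$ arrangements $\cC_0,\dots,\cC_k$ explicitly, using the closed-form coordinates furnished by Theorem \ref{thm:Formula-ABC}. Recall that $\cC_j=\cC(a_j,b_j,c_j)$, where the six normals are the columns of the matrix in \eqref{eq:abc}; concretely the lines of $\cC_j$ have the form $\{a_jy+z=0,\ -a_jy+z=0,\ b_jx+z=0,\ -b_jx+z=0,\ c_jx+y=0,\ -c_jx+y=0\}$. The total number of lines is $6(k+1)$, so the total number of (double or higher) intersection points, counted by the Bezout-type bound, is $\binom{6(k+1)}{2}$, and every such point must be accounted for by the claimed counts; this global check $t_2+3t_3+\binom{2k+2}{2}t_{2k+2}=\binom{6(k+1)}{2}$ is the sanity constraint that will validate the final tally.

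First I would identify the three base points $\mathcal{T}_3=\{(1:0:0),(0:1:0),(0:0:1)\}$ and verify, as stated in Definition \ref{def:Triangle} and the remark, that at each base point exactly $2(k+1)$ lines of $\cA_k$ meet: indeed the two lines $a_jy+z=0$ and $-a_jy+z=0$ of each $\cC_j$ both pass through $(1:0:0)$, giving $2(k+1)$ lines there, and symmetrically at the other two vertices. This yields the three points of multiplicity $2k+2$, hence $t_{2k+2}=3$. Next I would analyze the intersections \emph{within} a single $\cC_j$: since each $\cC_j$ is unassuming it contributes its own $15$ double points, but these are not base points and I must track whether they persist as mere nodes in the union or get absorbed into higher multiplicity points shared with other $\cC_{j'}$.

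The heart of the computation is the pairwise analysis of lines from two distinct arrangements $\cC_j$ and $\cC_{j'}$ with $j\neq j'$. Here I would group the twelve lines by which coordinate axis their normals avoid (the three ``pencils'' through the three base points) and compute, for lines in different pencils, the intersection point as a rational function of the parameters $a_j,c_{j'}$, etc. The mechanism producing the triple points is the following: certain triples of lines, one from $\cC_j$ and two configured appropriately, become concurrent precisely because of the multiplicative relations $b_k=1/a_k$, $c_k=\nt a_k$ among the parameters; these coincidences, tallied over all $\binom{k+1}{2}$ pairs and the internal nodes, should give $t_3=12k$ and leave $t_2=12(k^2-k+1)$ as the residual count of genuine nodes. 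For $\nt$ in a dense open set I may assume no \emph{unexpected} concurrences occur, i.e.\ that no four lines meet at a point off the base locus and no spurious coincidences arise from special values of $\nt$ (roots of unity or elements of $S$); this genericity is exactly what lets me treat the arithmetic of the powers $\nt^{\nu_j}$ as algebraically independent enough to forbid accidental collinearities.

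The main obstacle will be the bookkeeping of the triple points: I must show that the triple points number exactly $12k$ and not more, which requires proving that the only concurrences of three lines (away from the base points) are the ``structural'' ones dictated by the operator's self-similarity, and that these are all distinct for generic $\nt$. I expect to organize this by the cross-ratio relation \eqref{eq:Cross-1}, which encodes how consecutive arrangements sit on the three triangle lines; the triple points arising between $\cC_j$ and $\cC_{j+1}$ versus between $\cC_j$ and $\cC_{j'}$ with $|j-j'|\geq 2$ must be counted separately and shown disjoint. Once the triple-point count is pinned down and the base-point multiplicities are fixed, $t_2$ is forced by the global incidence identity $t_2=\binom{6(k+1)}{2}-3t_3-3\binom{2k+2}{2}$, and a direct simplification should return $12(k^2-k+1)$, completing the proof; the genericity hypothesis on $\nt$ absorbs the finitely many degenerate values where the counts would differ.
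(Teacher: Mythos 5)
Your overall skeleton (pencil structure at the three base points giving $t_{2k+2}=3$, structural concurrences giving $t_{3}$, genericity killing everything else, and the incidence identity $\sum_{r}\binom{r}{2}t_{r}=\binom{6(k+1)}{2}$ forcing $t_{2}$) is sound and close in spirit to the paper's, which instead runs an induction on $k$ and counts the double points directly. But your plan has a genuine gap exactly where you flag ``the main obstacle'': you never establish that the triple points number exactly $12k$. The paper gets this almost for free from the definition of the operator: since $\cC_{j+1}=\ldt(\cC_{j})$, each of the six lines of $\cC_{j+1}$ contains \emph{exactly three} of the fifteen nodes of $\cC_{j}$. Combined with the pencil structure (each line of $\cC_{j+1}$ passes through one base point, which is itself a node of $\cC_{j}$), each line of $\cC_{j+1}$ therefore contains one base point and exactly two non-base nodes of $\cC_{j}$; counting incidences, $6\times2=12$, all twelve non-base nodes of $\cC_{j}$ are hit, each by a single line of $\cC_{j+1}$, producing exactly $12$ triple points per consecutive pair and accounting for all $36$ intersections between $\cC_{j}$ and $\cC_{j+1}$ (so consecutive arrangements contribute no further double points). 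Note also that your proposed mechanism (``one from $\cC_{j}$ and two configured appropriately'') has the roles reversed: the structural triple points consist of \emph{two} lines of $\cC_{j}$ and \emph{one} line of $\cC_{j+1}$ meeting at a node of $\cC_{j}$. Deriving all this by brute force from the relations $b_{j}=1/a_{j}$, $c_{j}=\nt a_{j}$ is feasible, but it is precisely the work your proposal leaves undone.

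Two further points. First, a factual slip: you write that the fifteen nodes of each $\cC_{j}$ ``are not base points'', but three of them are exactly the base points (e.g.\ $a_{j}y+z=0$ and $-a_{j}y+z=0$ meet at $(1:0:0)$); the split $15=3+12$ is what makes the triple-point tally come out to $12$ per consecutive pair, so it cannot be glossed over. Second, you invoke genericity to exclude points of multiplicity at least $4$ off the base locus, but this is structural, not generic: you already grouped the lines into the three pencils through the base points, and two lines of the same pencil meet only at their base point, so at most three lines (one per pencil) can concur anywhere else --- this is exactly how the paper argues. Genericity is needed only where both you and the paper use it: the $6(k+1)$ lines being pairwise distinct, and non-consecutive pairs $\cC_{j},\cC_{j'}$ with $|j-j'|\geq2$ meeting only in double points that are distinct from all the previously listed special points.
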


\begin{proof}
For $\nt$ in a dense open set of $\PP^{1}$, the sequence $(\nt^{\nu_{k}})_{k\geq2}$
has only distinct elements, and the line arrangement $\cA_{k}$ is
the union of $6(k+1)$ distinct lines (see Theorem \ref{thm:Formula-ABC}).
We proceed by induction. All the lines in $\cA_{k}$ pass through
one of the three base points $p,q,r$ (see Definition \ref{def:Triangle}).
That implies that apart in these three points, the singularities of
$\cA_{k}$ can only be double or triple points. The line arrangement
$\cC_{k}\cup\cC_{k+1}$ has singularities 
\[
t_{2}=t_{3}=12,\,\,t_{4}=3.
\]
The $4$-points of $\cC_{k}\cup\cC_{k+1}$ are at $p,q,r$, these
points are the three $2k+2$-points of $\cA_{k}$, thus $\cA_{k+1}$
has three $2k+4$-points. Any line of $\cC_{k+1}$ passes through
three of the $15$ double points of $\cC_{k}$. The triple points
of $\cC_{k}\cup\cC_{k+1}$ are obtained as intersection points of
two lines of $\cC_{k}$ and one line in $\cC_{k+1}$. The $12$ nodes
of $\cC_{k}\cup\cC_{k+1}$ are the $12$ nodes of $\cC_{k+1}$ distinct
from $p,q,r$. The configuration $\cC_{k+1}$ meets the $6k$ lines
of $\cA_{k-1}$ in $6\cdot6k-6\cdot2k=24k$ double points. Thus the
number of double points of $\cA_{k+1}$ is 
\[
12(k^{2}-k+1)+24k=12(k^{2}+k+1)=12((k+1)^{2}-(k+1)+1),
\]
and the number of triple points of $\cA_{k+1}$ is $12(k+1)$. 
\end{proof}

\subsection{\label{subsec:Periodic-points}Periodic points of $\protect\L_{\{2\},\{3\}}$,
$\chi$ and $F_{\protect\L}$}

In Table \ref{tab:First-periodic-configurations}, we study the
line arrangement $\cC_{0}(\nt)$ and its orbit by $\L_{\{2\},\{3\}}$
for various roots of unity $\nt$, so that the operator $\L_{\{2\},\{3\}}$
is periodic. In the first column is the value of $\nt$, the columns
$2,3,4,$ are the period under the action of $\L_{\{2\},\{3\}}$,
$\chi$ and $F_{\L}$ respectively. The last column gives in the first
line the invariants of the arrangement $\cC=\cup_{k\geq0}\cC_{k}(\nt)$,
and when possible or needed, in the second line it gives invariants
or identification of the line arrangement $\overline{\cC}$ which
is obtained by taking the union of the orbits $\cO(\cC_{k}(\nt'))$
when $\nt'$ varies among the conjugates of $\nt$. We denote by $\ze_{k}$
a primitive $k$-root of unity.

\begin{table}
\begin{tabular}{|c|c|c|c|c|}
\hline 
Point $\nt$ & {\footnotesize{}$\cC_{0}$ fixed by} & {\footnotesize{}$\nt$ fixed by} & {\footnotesize{}$\varUpsilon\nt$ fixed by} & Configuration\tabularnewline
\hline 
$\ze_{3}$ & $\L_{\{2\},\{3\}}^{3}$ & $\chi$ & $F_{\L}$ & $\text{Ceva}(6)$\tabularnewline
\hline 
$\ze_{5}$ & $\L_{\{2\},\{3\}}^{4}$ & $\chi^{4}$ & $F_{\L}^{2}$ & {\footnotesize{}$\begin{array}{c}
t_{2}=t_{3}=48,t_{8}=3\\
\overline{\cC}=\text{Ceva}(10)
\end{array}$}\tabularnewline
\hline 
$\ze_{11}$ & $\L_{\{2\},\{3\}}^{5}$ & $\chi^{5}$ & $F_{\L}^{5}$ & {\footnotesize{}$\begin{array}{c}
t_{2}=2t_{3}=120,t_{10}=3\\
\overline{\cC}=\text{Ceva}(22)
\end{array}$}\tabularnewline
\hline 
$\ze_{7}$ & $\L_{\{2\},\{3\}}^{6}$ & $\chi^{6}$ & $F_{\L}^{3}$ & {\footnotesize{}$\begin{array}{c}
t_{2}=72,t_{3}=120,t_{12}=3\\
\overline{\cC}=\text{Ceva}(14)
\end{array}$}\tabularnewline
\hline 
$\zeta_{21}$ & $\L_{\{2\},\{3\}}^{6}$ & $\chi^{6}$ & $F_{\L}^{6}$ & {\footnotesize{}$\begin{array}{c}
t_{2}=216,t_{3}=72,t_{12}=3\\
\overline{\cC}=\text{Ceva}(42)
\end{array}$}\tabularnewline
\hline 
$\zeta_{43}$ & $\L_{\{2\},\{3\}}^{7}$ & $\chi^{7}$ & $F_{\L}^{7}$ & {\footnotesize{}$\begin{array}{c}
t_{2}=336,\,t_{3}=84,t_{14}=3\\
\overline{\cC}=\text{Ceva}(86)
\end{array}$}\tabularnewline
\hline 
$\zeta_{17}$ & $\L_{\{2\},\{3\}}^{8}$ & $\chi^{8}$ & $F_{\L}^{4}$ & {\footnotesize{}$\begin{array}{c}
t_{2}=480,t_{3}=96,t_{16}=3\\
\overline{\cC}=\text{Ceva}(34)
\end{array}$}\tabularnewline
\hline 
$\zeta_{9}$ & $\L_{\{2\},\{3\}}^{9}$ & $\chi^{3}$ & $F_{\L}^{3}$ & $\text{Ceva}(18)$\tabularnewline
\hline 
$\zeta_{19}$ & $\L_{\{2\},\{3\}}^{9}$ & $\chi^{9}$ & $F_{\L}^{9}$ & {\footnotesize{}$\begin{array}{c}
t_{2}=432,t_{3}=180,t_{18}=3\\
\overline{\cC}=\text{Ceva}(38)
\end{array}$}\tabularnewline
\hline 
$\zeta_{31}$ & $\L_{\{2\},\{3\}}^{10}$ & $\chi^{10}$ & $F_{\L}^{5}$ & {\footnotesize{}$\begin{array}{c}
t_{2}=840,\,t_{3}=120,t_{20}=3\\
\overline{\cC}=\text{Ceva}(62)
\end{array}$}\tabularnewline
\hline 
$\zeta_{15}$ & $\L_{\{2\},\{3\}}^{12}$ & $\chi^{4}$ & $F_{\L}^{4}$ & {\footnotesize{}$\begin{array}{c}
t_{2}=t_{3}=432,t_{24}=3\\
\overline{\cC}=\text{Ceva}(30)
\end{array}$}\tabularnewline
\hline 
$\zeta_{13}$ & $\L_{\{2\},\{3\}}^{12}$ & $\chi^{12}$ & $F_{\L}^{6}$ & {\footnotesize{}$\begin{array}{c}
t_{2}=144,t_{3}=528,t_{24}=3\\
\overline{\cC}=\text{Ceva}(26)
\end{array}$}\tabularnewline
\hline 
$\zeta_{33}$ & $\L_{\{2\},\{3\}}^{15}$ & $\chi^{5}$ & $F_{\L}^{5}$ & {\footnotesize{}$\begin{array}{c}
t_{2}=2t_{3}=1080,t_{30}=3\\
\overline{\cC}=\text{Ceva}(66)
\end{array}$}\tabularnewline
\hline 
$\ze_{23}$ & $\L_{\{2\},\{3\}}^{22}$ & $\chi^{22}$ & $F_{\L}^{11}$ & {\footnotesize{}$\begin{array}{c}
t_{2}=264,t_{3}=1848,t_{44}=3\\
\overline{\cC}=\text{Ceva}(46)
\end{array}$}\tabularnewline
\hline 
$\ze_{29}$ & $\L_{\{2\},\{3\}}^{28}$ & $\chi^{28}$ & $F_{\L}^{14}$ & {\footnotesize{}$\begin{array}{c}
t_{2}=336,t_{3}=3024,t_{56}=3\\
\overline{\cC}=\text{Ceva}(58)
\end{array}$}\tabularnewline
\hline 
\end{tabular}\caption{\label{tab:First-periodic-configurations}First periodic configurations}

\end{table}

For the values of $\zeta_{k}$ in Table \ref{tab:First-periodic-configurations},
the union of the periodic line arrangements $\cC_{k}$ is contained
in some $\text{Ceva}(m(k))$ line arrangement, for some $m(k)\in\NN$.
It would be interesting to know if this is true for any $k$. By the
following remarks, necessarily one has $\forall k,\,\,m(k)\notin\{3,5,7,9\}$: 
\begin{rem}
\label{rem:By-doing-a}Let be $m\in\{3,5,7,9\}$. By doing a systematic
search among the sub-arrangements of six lines of $\text{Ceva}(m)$,
one obtains that $\text{Ceva}(m)$ does not contain an unassuming
arrangement. 

Let $N(k)$ be the order of $-2$ in the multiplicative group $(\ZZ/k\ZZ)^{*}$.
Since $\chi$ is the map $\nt\to-\tfrac{1}{\nt^{2}}$, a primitive
$k$-root of unity $\ze_{k}$ with $k$ odd is periodic of period
$N(k)$ for $\chi$.
\end{rem}

\subsection{\label{subsec:Antecedents-of-,}Antecedents of $0$ by $\chi$}

The configuration $\cC_{0}=\cC_{0}(i)$ (for $i^{2}=-1$) is such
that $\varUpsilon(i)=1$. The arrangement $\cC_{1}=\L_{\{2\},\{3\}}(\cC_{0})$
is the complete quadrilateral and $\cC_{2}=\L_{\{2\},\{3\}}(\cC_{1})$
is equal to $\emptyset$, moreover 
\begin{prop}
The arrangement $\cA_{12}=\cC_{0}\cup\cC_{1}$ has $12$ lines with
singularities
\[
t_{3}=16,\,t_{4}=3,\,t_{r}=0\text{ for }r\neq3,4.
\]
The lines and the triple points form a $(16_{3},12_{4})$-configuration,
which is the $\text{Ceva}(4)$ arrangement.
\end{prop}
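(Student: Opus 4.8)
The plan is to exploit the self-similar coordinates $\cC(a,b,c)$ of \eqref{eq:abc}, in which the Ceva structure becomes transparent, rather than to grind through the explicit matrices of Section~\ref{subsec:Images of six lines}. First I would reduce to a convenient model: since $\cC(1,1,i)$ is projectively equivalent to $\cC_{0}(i)$ (its parameter is $ac/b=i$), and the operator $\ldt$ is defined geometrically and hence commutes with projective transformations, a projective map carrying $\cC_{0}(i)$ to $\cC(1,1,i)$ carries $\cC_{1}$ to $\ldt(\cC(1,1,i))$ and the union $\cC_{0}\cup\cC_{1}$ to the corresponding union. Because the Ceva arrangements and the numbers $t_{r}$ are projective invariants, it suffices to analyze $\tilde{\cC}_{0}\cup\tilde{\cC}_{1}$ with $\tilde{\cC}_{0}=\cC(1,1,i)$. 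By Theorem~\ref{thm:Formula-ABC} (equivalently, from the transformation $\cC(a,b,c)\mapsto\cC(\tfrac{b}{c},ac,\tfrac{b}{a})$) one gets $\tilde{\cC}_{1}=\cC(-i,i,1)$, whose parameter $ac/b=-1$ confirms that $\cC_{1}$ is a complete quadrilateral.

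Next I would write out the $12$ normals from \eqref{eq:abc}. The six columns of $\cC(1,1,i)$ give the lines $y\pm z=0$, $x\pm z=0$, $y\pm ix=0$, while the six columns of $\cC(-i,i,1)$ give $z\pm iy=0$, $z\pm ix=0$, $y\pm x=0$. Grouping by which coordinate is absent from the normal, the four lines with no $z$ are $y^{4}=x^{4}$, the four with no $y$ are $z^{4}=x^{4}$, and the four with no $x$ are $z^{4}=y^{4}$. Hence the $12$ lines of $\tilde{\cA}_{12}$ are exactly the zero locus of $(x^{4}-y^{4})(x^{4}-z^{4})(y^{4}-z^{4})$, so $\tilde{\cA}_{12}$ is the $\text{Ceva}(4)$ arrangement. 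That the $12$ lines are pairwise distinct is immediate, since inside each family the four slopes are the four fourth roots of unity and the three families involve distinct coordinate pairs.

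Finally the singularities are read off from the standard structure of $\text{Ceva}(4)$. A line passes through $(1:0:0)$, $(0:1:0)$ or $(0:0:1)$ exactly when its normal has a vanishing first, second or third coordinate; each vertex is then met by one whole family of four lines, giving three $4$-points, so $t_{4}=3$. The remaining intersections are the $16$ points $(i^{s}:i^{t}:1)$, $s,t\in\{0,1,2,3\}$, each lying on exactly one line of each family, so they are triple points and $t_{3}=16$. That these are all the singularities follows from the pair count $\binom{12}{2}=66=3\binom{4}{2}+16\binom{3}{2}$, which leaves no room for further incidences and forces $t_{r}=0$ for $r\notin\{3,4\}$. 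Each triple point lies on $3$ lines and each of the $12$ lines contains exactly $4$ of them (for instance $y=x$ meets the triple points $(i^{s}:i^{s}:1)$, $s=0,\dots,3$), which is the asserted $(16_{3},12_{4})$-configuration.

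The argument is essentially a transparent computation rather than a genuine obstacle; the one point needing care is the reduction in the first paragraph, namely that $\cC_{0}(i)$ may be replaced by the projectively equivalent model $\cC(1,1,i)$ without changing the projective class of $\cC_{0}\cup\cC_{1}$. This is legitimate precisely because $\ldt$ is projectively natural, so a projective equivalence between the starting arrangements induces one between the images and hence between the two unions.
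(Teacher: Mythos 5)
Your proof is correct, and it is worth noting that the paper offers no written argument for this proposition at all: it is stated as the outcome of a direct computation, like the neighbouring claims that $\cC_{1}$ is the complete quadrilateral and $\cC_{2}=\emptyset$. Your verification therefore supplies what the paper leaves implicit, and does so transparently: in the paper's own coordinates $\cC(a,b,c)$ of \eqref{eq:abc} the twelve normals assemble into the three pencils $y^{4}=x^{4}$, $z^{4}=x^{4}$, $z^{4}=y^{4}$, so the union is literally the zero locus of $(x^{4}-y^{4})(x^{4}-z^{4})(y^{4}-z^{4})$, i.e.\ $\text{Ceva}(4)$ in standard form; and the count $\binom{12}{2}=66=3\binom{4}{2}+16\binom{3}{2}$ correctly closes off any unlisted singularity once you have checked that the three coordinate vertices are exactly $4$-points and the sixteen points $(i^{s}:i^{t}:1)$ are exactly triple points.

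The one place where you lean on statements the paper makes only for \emph{generic} parameters is the pair of facts $\cC(1,1,i)\sim\cC_{0}(i)$ and $\ldt(\cC(a,b,c))=\cC(\tfrac{b}{c},ac,\tfrac{b}{a})$; since $i$ is not an innocuous value (the period map sends it to $1$ and the image arrangement degenerates to the complete quadrilateral), both should strictly be certified at this parameter. Both do hold. For the first, the linear map
\[
A=\left(\begin{array}{ccc}
0 & 0 & -1\\
1 & -1 & 0\\
1 & 1 & -1
\end{array}\right)
\]
carries the six columns of $M_{i}$ from \eqref{eq:Matrix-Mt}, in order and up to scalars, to the six normals $(0,1,1)$, $(0,-1,1)$, $(1,0,1)$, $(-1,0,1)$, $(i,1,0)$, $(-i,1,0)$ of $\cC(1,1,i)$, which exhibits the projective equivalence. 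For the second, since $\cC(1,1,i)$ is therefore unassuming, $\ldt(\cC(1,1,i))$ consists of exactly six lines; by specialization (incidence is a closed condition and all the relevant lines and double points are regular in $(a,b,c)$ near $(1,1,i)$) each of the six lines of $\cC(-i,i,1)$ contains at least three double points of $\cC(1,1,i)$, and since these lines are distinct from the six lines of $\cC(1,1,i)$, they contain exactly three, because for an unassuming arrangement the only lines through more than three double points are its own six lines. Hence $\ldt(\cC(1,1,i))=\cC(-i,i,1)$, as you use. (Alternatively, one can bypass both genericity issues by checking directly, a finite computation, that each line of $\cC(-i,i,1)$ passes through exactly three of the fifteen double points of $\cC(1,1,i)$.) With this point made explicit, your argument is complete.
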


For $n\leq8$ and $\b\in(\chi^{n})^{-1}(0)$, we computed that the
sequence $(\cC_{k}(\b))_{k}$ has length $n+1$: $\cC_{0}(\b),\dots,\cC_{n-1}(\b)$
are arrangements of $6$ lines in $\kBS$, $\cC_{n}$ is the complete
quadrilateral, and $\cC_{k}=\emptyset$ for $k\geq n$. By taking
the union of arrangements $\cC_{k}(\b)$ with $\b\in(\chi^{n})^{-1}(0)$,
one obtains the configurations $\text{Ceva}(2^{n})$. %

\subsection{\label{subsec:Disassembling-Hesse}Assembling the Hesse configuration}

Let $\cC_{0}$ be the configuration of $6$ lines such that their
normals are the columns of following matrix
\begin{equation}
\left(\begin{array}{cccccc}
1 & 0 & 0 & 1 & -j & \frac{1}{3}(1-j)\\
0 & 1 & 0 & 1 & j+1 & \frac{1}{3}(j+2)\\
0 & 0 & 1 & 1 & 1 & 1
\end{array}\right),\label{eq:HesseC}
\end{equation}
where $j^{2}+j+1=0$. This is an unassuming arrangement: the configuration
$\L_{2}(\cD(\cC_{0}))$ has singularities $t_{2}=27,\,t_{3}=6,\,t_{5}=6$
and the $5$-points are not contained in a conic. A direct computation
shows that:
\begin{prop}
The associated $\L_{\{2\},\{3\}}$-sequence $(\cC_{k})_{k\in\NN}$
is periodic with period $2$ and $\cH=\cC_{0}\cup\cC_{1}$ is the
Hesse configuration of $12$ lines with singularities $t_{2}=12,t_{4}=9$.
\\
The arrangements $\cC_{0}$ and $\cC_{1}$ corresponds to the two
points of the labelled moduli space of realization of the second matroid
in Section \ref{subsec:Matroids-and-six}.
\end{prop}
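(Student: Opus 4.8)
The plan is to verify the claim by a direct computation, but organized so that the two assertions---periodicity with period $2$, and the identification of $\cC_{0}\cup\cC_{1}$ with the Hesse configuration---are handled separately. First I would compute $\cC_{1}=\ldt(\cC_{0})$ explicitly from the matrix \eqref{eq:HesseC}. By Corollary \ref{cor:For--such}, the $\ldt$-operator sends a labelled arrangement of the form $M_{\nt}$ to $\cC_{0}(-\tfrac{1}{\nt^{2}})$; however the matrix in \eqref{eq:HesseC} is \emph{not} of the shape $M_{\nt}$, so I cannot directly quote that formula. Instead I would follow the construction of $\ldt$ literally: list the $15$ double points of $\cC_{0}$ as the columns of a $3\times 15$ matrix, then find the six lines each passing through exactly three of these points. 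The normals of those six lines constitute $\cC_{1}$. To establish period $2$, I then compute $\ldt(\cC_{1})$ the same way and check that its six lines coincide (as a set, up to scalars) with the six lines of $\cC_{0}$. Since the arrangement lies in the second (two-point) matroid stratum of Section \ref{subsec:Matroids-and-six}, which consists of exactly two points conjugate over $\QQ(j)$, the most conceptual argument is that $\ldt$ must permute these two points, and being an involution on a two-element set with no fixed point (the two arrangements being distinct), it interchanges them; hence $\cC_{0},\cC_{1}$ are precisely those two realizations and the sequence has period $2$.

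For the identification with the Hesse configuration, I would form the union $\cH=\cC_{0}\cup\cC_{1}$ of the twelve lines and compute its singularities directly from the twelve normals, tallying the multiplicity of every intersection point. The goal is to confirm $t_{2}=12$ and $t_{4}=9$ with all other $t_{r}=0$. The Hesse configuration of twelve lines is characterized (up to projective equivalence over $\CC$) by having exactly nine quadruple points and twelve double points---these nine $4$-points being the nine inflection points of a smooth plane cubic, through which the twelve lines fall into four triangles. So after computing the incidence data I would exhibit the nine $4$-points explicitly and check that they form the Hesse point configuration, for instance by verifying they are the base locus of the Hesse pencil $x^{3}+y^{3}+z^{3}-3\l xyz$, or equivalently that they are projectively equivalent to $\{(0:1:-\om^{k}),(-\om^{k}:0:1),(1:-\om^{k}:0)\}_{k}$ for $\om$ a cube root of unity. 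Matching the twelve computed lines to the twelve lines of the Hesse pencil completes the identification.

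I expect the main obstacle to be purely computational bookkeeping rather than conceptual: the entries in \eqref{eq:HesseC} involve $j$ with $j^{2}+j+1=0$ and the fraction $\tfrac{1}{3}$, so the coordinates of the $15$ double points and the six output lines will be somewhat unpleasant algebraic numbers in $\QQ(j)$, and verifying which triples of double points are collinear (to pin down the six lines of $\cC_{1}$) requires checking $3\times 3$ determinants vanish over this field. The cleanest route is to let Magma perform these determinant and incidence computations, as the paper already does for its moduli-space calculations; the role of the written proof is then to record that the output of $\ldt$ on \eqref{eq:HesseC} returns, after the normalizing projective transformation, the Galois-conjugate realization, and that the twelve-line union has the stated $(12_{2},9_{4})$ incidence structure matching Hesse. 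The one genuinely structural point worth stating carefully is \emph{why} the two arrangements $\cC_{0},\cC_{1}$ are exactly the two points of the second matroid's realization space: this follows because $\ldt$ preserves the property of being an unassuming arrangement in that stratum (as noted after Corollary \ref{cor:For--such}), and the realization space there is a two-element set, so the induced map is an involution of that set which, together with $\cC_{0}\neq\cC_{1}$, forces period $2$.
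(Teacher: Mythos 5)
Your main route is the paper's own: the paper proves this proposition by the single phrase ``a direct computation shows that,'' i.e., by listing the $15$ nodes of $\cC_{0}$, extracting the six lines through exactly three of them, iterating once more, and tallying the incidences of the union --- all done in Magma. Your additional observation that any $12$-line arrangement with $t_{2}=12$, $t_{4}=9$ is automatically the Hesse configuration (the nine $4$-points then form a $(9_{4},12_{3})$ Sylvester--Gallai configuration, hence over $\CC$ are the inflection points of a cubic) is a correct and slightly more self-contained way to finish the identification.

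The ``structural'' argument you lean on for period $2$ and for the last claim, however, has genuine gaps. First, it is circular: the statement that $\ldt$ maps an unassuming arrangement of the two-point stratum to another unassuming arrangement in that same stratum is, in the paper, only a forward-pointing remark after Corollary \ref{cor:For--such}, whose justification is the present proposition; it cannot be used as an input. Second, even granting that $\ldt$ induces the swap of the two-element labelled realization space, that only shows $\cC_{2}$ is \emph{projectively equivalent} to $\cC_{0}$, whereas periodicity of $(\cC_{k})_{k\in\NN}$ means $\cC_{2}=\cC_{0}$ as sets of lines in $\PP^{2}$ --- a strictly stronger statement. The paper's Table \ref{tab:First-periodic-configurations} shows the two notions genuinely diverge (e.g.\ for $\nt=\zeta_{15}$ the period under $F_{\L}$ is $4$ while the period of the arrangement sequence under $\ldt$ is $12$), so no moduli-level argument can by itself ``force period $2$.'' Third, the step ``no fixed point because $\cC_{0}\neq\cC_{1}$'' conflates arrangements in the plane with moduli points: by the remark following the proposition, the two labelled points give projectively equivalent unlabelled arrangements, and $\ldt$ outputs an unlabelled arrangement, so which of the two points $\cC_{1}$ ``is'' depends on a choice of labelling. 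Note finally that the plane-level Galois shortcut also fails: since the first four columns of \eqref{eq:HesseC} are rational, the conjugate arrangement $\sigma(\cC_{0})$ (with $\sigma:j\mapsto j^{2}$) shares four lines with $\cC_{0}$, hence $\cC_{1}\neq\sigma(\cC_{0})$; the conjugacy between $\cC_{0}$ and $\cC_{1}$ holds only after the normalizing projective transformation, and verifying that is once again the direct computation. So the computation described in your first two paragraphs is not replaceable by the structural remark; it \emph{is} the proof, exactly as in the paper.
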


\begin{rem}
The moduli space in Section \ref{subsec:Matroids-and-six} has two
points, but this is the moduli of labelled line arrangements, forgetting
the marking gives projectively equivalent arrangements.
\end{rem}

One can check moreover that the Hesse configuration contains exactly
$6$ arrangements $\cC$ of six lines having only nodal singularities.
Each of these line arrangement $\cC$ is an unassuming arrangement.
These six arrangements are projectively equivalent: they form an orbit
of an order $6$ projective automorphism of $\PP^{2}$. 

We remark that the dual Hesse arrangement is also known as the $\text{Ceva}(3)$
arrangement. 

Finally, consider $\check{\co}=\cL_{2}(\cD(\co))$ and $\check{\cC_{1}}=\L_{\{3\},\{2\}}(\check{\co})$.
One has 
\[
\L_{\{3\},\{2\}}(\check{\cC_{1}})=\check{\cC_{0}}.
\]
The union $\check{\co}\cup\check{\cC_{1}}$ has $21$ lines and singularities
$t_{2}=36,t_{4}=9,t_{5}=12$; it is the reflexion line arrangement
associated to the complex reflexion group $G_{26}$ in Shephard-Todd
classification. 

\subsection{\label{subsec:The-cubic-surface}Cubic surfaces and unassuming arrangements}

The aim of this section is to prove the following result:
\begin{prop}
\label{prop:AllModuli}Let $\co$ be an unassuming arrangement. Then
$\co$ is in the family $\kBS/S_{6}$ or in the second moduli space
described in Section \ref{subsec:Matroids-and-six}.
\end{prop}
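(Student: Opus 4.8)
The plan is to connect unassuming arrangements to the geometry of smooth cubic surfaces via the blow-up construction, and then invoke the known classification of automorphism groups of cubic surfaces to bound the moduli. The starting observation, already flagged in the introduction, is that the six points $P_6 = \cD(\co)$ dual to an unassuming arrangement are \emph{not} contained in a conic (this is part of the definition), and since $\co$ has only nodal singularities the six points are in general position in the sense required to blow them up and obtain a smooth cubic surface $X$. First I would make this correspondence precise: a smooth cubic surface is the blow-up of $\PP^2$ at six points in general position, so $P_6$ determines $X$ up to isomorphism, and conversely the choice of the six exceptional curves (a ``sixer'') recovers $P_6$.

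The crux is to translate the combinatorial hypothesis defining ``unassuming'' into a geometric condition on $X$. The key dictionary entry, stated at the end of Section \ref{subsec:The-cubic-surface}'s preamble, is that the six triple points of $\cL_2(P_6)$ correspond to \emph{Eckardt points} of $X$ --- the isolated fixed points of involutions in $\aut(X)$. Thus an unassuming arrangement gives rise to a cubic surface $X$ carrying (at least) six Eckardt points, equivalently six involutions. So I would next argue: the existence of exactly six triple points in $\cL_2(P_6)$, together with the genericity condition (the six points not on a conic), forces $X$ to have an automorphism group containing a prescribed configuration of Eckardt-point involutions. The reasoning runs the other way too: any cubic surface with the appropriate number and configuration of Eckardt points, after choosing a sixer, yields an unassuming arrangement.

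The main obstacle --- and the step where the real work lies --- is to show there are \emph{no other} irreducible components beyond the two matroid families of Section \ref{subsec:Matroids-and-six}. Here I would lean on the full classification of possible automorphism groups of smooth cubic surfaces (and in particular the classification of the number and configuration of Eckardt points each group admits), due to the classical and modern literature on cubic surfaces. The plan is to enumerate which $X$ can carry six Eckardt points arranged so that the dual picture reproduces the numerics $t_2=27,\,t_3=6,\,t_5=6$; one then checks that every such $X$, with every admissible choice of sixer, produces an arrangement projectively equivalent to one in $\kBS$ or to the isolated Hesse-type arrangement of Section \ref{subsec:Disassembling-Hesse}. The delicate points are (i) verifying that the Eckardt-point count translates exactly into $t_3=6$ and not some larger or smaller value, so that the ``exactly six triple points'' condition is neither vacuous nor over-determined, and (ii) handling the passage between labelled and unlabelled arrangements, since a single $X$ gives many sixers (an orbit under the Weyl group $W(E_6)$ action on the $72$ sixers) but these must all collapse, modulo projective equivalence, into the two known families.

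Concretely, I would proceed as follows: associate $X$ to $\co$; identify the six triple points with six Eckardt points / involutions on $X$; use the classification to list all groups $G \le \aut(X)$ that can realize six such involutions with the required incidence pattern; for each, recover the arrangement and match it against the matrices $M_\nt$ of \eqref{eq:Matrix-Mt} or the Hesse matrix \eqref{eq:HesseC}; and conclude that no further component arises. The genericity condition excluding $P_6$ on a conic is what guarantees $X$ is smooth throughout, so it should be invoked at the very first step to legitimize the blow-up. I expect the classification input to do most of the heavy lifting, with the verification of the precise Eckardt-point numerics being the point most in need of careful checking.
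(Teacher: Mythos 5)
Your proposal follows essentially the same route as the paper's own proof: blow up the six dual points to obtain a smooth cubic surface, identify the triple points of $\cL_{2}(P_{6})$ with Eckardt points, invoke the Dolgachev--Duncan classification of cubic surfaces with at least six Eckardt points, and then check stratum by stratum (here the $4\text{B}$, $3\text{A}$, $12\text{A}$, $5\text{A}$ Clebsch, and $3\text{C}$ Fermat cases) which choices of six disjoint lines can reproduce the numerics $t_{2}=27$, $t_{3}=t_{5}=6$. The two delicate points you flag --- that Eckardt points may also arise from conic--line tangencies at $5$-points rather than only from triple points, and the matching of all sixers against the two known families --- are exactly the issues the paper's case analysis handles, so your outline is the correct one.
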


That result on matroids has been checked by L. Kühne (personal communication)
using combinatorial tool. That proves Theorem \ref{thm:Main1} without
assumption on the six dual points not being on a conic. We give below
a geometric proof.

Rather than studying $\cC_{0}$, we will be using its dual $\check{\cC_{0}}=\cL_{2}(\cD(\co))$.
We say that a set $P_{6}$ of six points $p_{1},\dots,p_{6}$ in the
plane is unassuming if these points are dual to an unassuming arrangement,
which means that the points are not contained in a conic and $\check{\cC_{0}}=\cL_{2}(P_{6})$
is a line arrangement of $15$ lines with singularities
\[
t_{2}=27,\,t_{3}=t_{5}=6.
\]
For classifying these arrangements, we will use the classification
of cubic surfaces and their automorphism groups.

Our reference for cubic surfaces is \cite[Section V.4]{Hartshorne}.
Let $P_{6}=\{p_{1},\dots,p_{6}\}$ be an arrangement of points in
the plane such that no three are on a line and not all are contained
in a conic. Let $\pi:X\to\PP^{2}$ be the blow-up at the points in
$P_{6}$. By taking the pull-back to $X$ of the linear system of
cubics with base points $p_{1},\dots,p_{6}$, one obtains a morphism
$X\to\PP^{3}$; it is an embedding and we identify $X$ with its image,
which is a smooth cubic surface. 

The cubic surface $X$ contains exactly $27$ lines, which are the
strict transform $F_{ij}$ of the $15$ lines through points $p_{i},p_{j}$,
($1\leq i<j\leq6$), the six exceptional curves $E_{1},\dots,E_{6}$
above the points $p_{1},\dots,p_{6}$, the strict transform $G_{1},\dots,G_{6}$
of the six conics $C_{1},\dots C_{6}$ containing five of the six
points $p_{1},\dots,p_{6}$ (we denote by $G_{k}$ the strict transform
of the conic which does not contain $p_{k}$). Conversely, if $E_{1}',\dots,E_{6}'$
are six disjoint lines on a smooth cubic surface, there exists a blow-down
map $\pi':X\to\PP^{2}$ contracting the six lines $E_{j}'$ to points
$p_{1}',\dots,p_{6}'$ and such that the image of the $21$ remaining
lines by $\pi'$ are $15$ lines through the points $p_{j}'$ and
six conics as above.

A point on a smooth cubic surface $X$ which is intersection of three
lines is called an Eckardt point. The image of an Eckardt point by
the blow-down map $\pi:X\to\PP^{2}$ can be \\
a) either a triple point of the line arrangement $\cL_{2}(P_{6})$,
or\\
b) a $5$-point $p_{j}$ of the line arrangement $\cL_{2}(P_{6})$,
in that case, one conic $C_{k}$ among the $5$ conics containing
$p_{j}$ is tangent to one of the lines $L_{ij}$ of $\cL_{2}(P_{6})$
at $p_{j}$, so that the exceptional curve $E_{j},$ and the lines
$F_{ij}$ and $G_{k}$ meet at one point. \\
Conversely, a configuration described in a) or b) corresponds to an
Eckardt point on the surface $X$. Eckardt points on a cubic surface
are of special interest, since to each Eckardt point there corresponds
an involution of $X$ fixing the Eckardt point and an hyperplane.
The automorphism groups of cubic surfaces are classified in \cite{DD}.
In characteristic $0$, cubic surfaces with at least six Eckardt points
are classified as follows

\begin{tabular}{|c|c|c|c|}
\hline 
$|\mathcal{E}|$ & $G$ & Strata & $\dim$\tabularnewline
\hline 
$6$ & $S_{4}$ & $4\text{B}$ & $1$\tabularnewline
\hline 
$9$ & $H_{3}(3)\rtimes\ZZ/2\ZZ$ & $3\text{A}$ & $1$\tabularnewline
\hline 
$9$ & $H_{3}(3)\rtimes\ZZ/4\ZZ$ & $12\text{A}$ & $\{pt\}$\tabularnewline
\hline 
$10$ & $S_{5}$ & $5\text{A}$ & Clebsch\tabularnewline
\hline 
$18$ & $(\ZZ/3\ZZ)^{3}\rtimes S_{4}$ & $3\text{C}$ & Fermat\tabularnewline
\hline 
\end{tabular}

Here $H_{3}(3)$ denotes the Heisenberg group. The column $|\mathcal{E}|$
indicates the number of Eckardt points, the next column is for the
automorphism group of the cubic surface; the surfaces are classified
according to their strata (which is irreducible). When the strata
has a unique point and when it exists, we give the name of the corresponding
surface, otherwise we give the dimension of the strata. 

Let us proceed according to the strata. By definition of the strata,
the abstract configuration of lines and Eckardt points remains the
same in each strata, so that it is enough to known that configuration
in one example. One can find the equations of the cubic surfaces according
to the strata in \cite{DD}, or \cite{Hosoh}.

Case $4\text{B}$. The configurations $\check{\co}(\nt)$ obtained
from the family $\kBS$ give all the cubic surfaces in the strata
$4\text{B}$. Using Magma, one compute that such a cubic has equation
\[
\begin{array}{c}
X_{1}X_{2}X_{3}-\tfrac{1}{2}X_{2}^{2}X_{3}+\tfrac{\nt^{2}+1}{\nt^{2}-1}X_{2}X_{3}^{2}-X_{1}^{2}X_{4}+\tfrac{1}{8}(\nt^{2}+3)X_{2}^{2}X_{4}\\
-\tfrac{3\nt^{2}+1}{\nt^{2}-1}X_{2}X_{3}X_{4}+X_{3}^{2}X_{4}+2X_{1}X_{4}^{2}+\tfrac{1}{8}\tfrac{\nt^{4}+6\nt^{2}+9}{\nt^{2}-1}X_{2}X_{4}^{2}-2X_{3}X_{4}^{2}=0
\end{array}.
\]

Cases $3\text{A}$ and $12\text{A}$. Using Magma, one computes the
$27$ lines and $9$ Eckardt points for some surfaces in strata $3\text{A}$
and $12\text{A}$. Then we check that each of the $27$ lines contains
a unique Eckardt point. Therefore the image of these $9$ points through
any blow-down map $\pi':X\to\PP^{2}$ will be the six $5$-points
and $3$ triple points of the associated $15$ line arrangement.

Case $3\text{C}$. Consider the six points from \eqref{eq:HesseC},
obtained from the Hesse configuration. By blowing-up these points,
one obtains the Fermat cubic surface. Each line of the Fermat cubic
contains two Eckardt points, thus blowing down any set of six disjoint
lines gives a $15$ lines arrangement with six triple points. It correspond
to the rigid unassuming arrangements.

Case $5\text{A}$. The Clebsch cubic is obtained by blowing-up the
set $P_{6}=\{c_{1},\dots,c_{6}\}$ of six points which are the vertices
of a regular pentagon and the center of the pentagon. The line arrangement
$\cL_{2}(P_{6})$, which has $15$ lines, is know under the name of
the  Icosahedral line arrangement. It is also the simplicial arrangement
denoted by $\cA(15)_{120}$ in \cite{Ctz} and it has singularities
\[
t_{2}=15,t_{3}=10,t_{5}=6,
\]
thus the $5$-points $c_{1},\dots,c_{6}$ are not an unassuming arrangement.
The Clebsch cubic surface $X$ possesses $10$ Eckardt points, the
inverse image of the ten triple points under the blow-up map. The
$15$ lines $F_{ij}$ which are the strict transform of the lines
through $c_{i},c_{j}$ form a $(10_{3},15_{2})$-configuration. The
$12$ remaining lines $E_{j},G_{k}$ of $X$ do not contain Eckardt
points. 

Let us prove that using the Clebsch cubic $X$, it is not possible
to construct a set of six disjoint lines such that the image of the
lines on $X$ by the map which contracts these six lines gives an
arrangement of $15$ lines with $t_{2}=27,t_{3}=t_{5}=6$. If one
wants to construct such a set of lines, one must take disjoint lines
$F_{st}$ (each containing two Eckardt points), to be mapped to two
$5$-points, thus we must choose two lines $F_{ij}$, $F_{ij'}$ (which
are disjoint) for some $i,j\neq j'$. The lines that are disjoint
to $F_{ij}$ and $F_{ij'}$ are the $10$ lines $F_{ik},E_{k},G_{k}$
for $k\in\{1,\dots,6\}\setminus\{i,j,j'\}$ and the line $F_{j,j'}$.
One can not take another line of the form $F_{rs}$, moreover it is
easy to check that among the remaining $6$ lines, one cannot choose
$4$ lines disjoint from $F_{i,j}$, $F_{i,j'}$. Thus one cannot
obtain a new unassuming arrangement by using the Clebsch cubic. 

That finishes the proof of Proposition \ref{prop:AllModuli}. 

\subsection{Concluding Remarks}

\subsubsection{Special values of the parameter $\protect\nt$}

Let us define $\cS=\{\pm2\pm\sqrt{5}\}$; these are the values $\nt$
such that the line arrangement $\cC_{0}(\nt)$ defined in \ref{eq:Matrix-Mt}
is not an unassuming arrangement: $\cC_{0}(\nt)$ has $15$ nodes
but the line arrangement $\cL_{2}(\cD(\cC_{0}))=\cD(\cP_{2}(\co))$
has $15$ lines and singularities 
\[
t_{2}=15,t_{3}=10,t_{5}=6.
\]
That line arrangement is the simplicial arrangement $\cA(15)_{120}$
(see \cite{CEL}). 

The image $\cC_{1}'$ by $\ldt$ of $\co=\cC_{0}(\nt)$ is a line
arrangement of $10$ lines with $45$ nodes. The line arrangement
$\co\cup\cC_{1}'$ has $16$ lines with singularities $t_{2}=30,\,t_{4}=15$.
We conjecture that the number of lines of the iterates of $\cC_{0}$
under $\ldt$ diverges toward infinity. 

 In the degenerate cases $\nt\in\{0,\pm1,\infty\}$, one has the
relations $\varUpsilon(1)=\varUpsilon(-1)=1,$ $\varUpsilon(0)=\varUpsilon(\infty)=\infty$
and the peculiarity of these cases is reflected by the map $F_{\L}$
since $F_{\L}(0)=-1,$ $F_{\L}(-1)=1$ and the points $1$ and $\infty$
are fixed points of $F_{\L}$. The image by the period map $\varUpsilon$
of $\nt\in\cS$ equals to $161$ and we remark that the peculiar dynamical
behavior of $\cC_{0}(\nt)$ under the map $\ldt$ is unseen by the
map $F_{\L}$. 

Finally, we notice that the line arrangement $\cC_{1}'$ contains
exactly $5$ subsets of $6$ lines which are unassuming arrangements,
all are such that their image by $\varUpsilon$ equals $51841=2\cdot161^{2}-1$.
Each of the $15$ double points of $\cC_{0}$ is the vertex three
base points (see Definition \ref{def:Triangle}) of unassuming arrangements. 

Figure \ref{fig:SimplicialArr} is a picture of $\cA(15)_{120}$;
the six $5$-points of $\check{\cC_{0}}(\nt)$ (for fixed $\nt\in\cS$)
are in black, the ten triple points are in red or blue, the six points
in red are dual to an unassuming arrangement, the $4$ other sets
of six points with that property are obtained by taking the image
by the order $5$ rotation through the center.

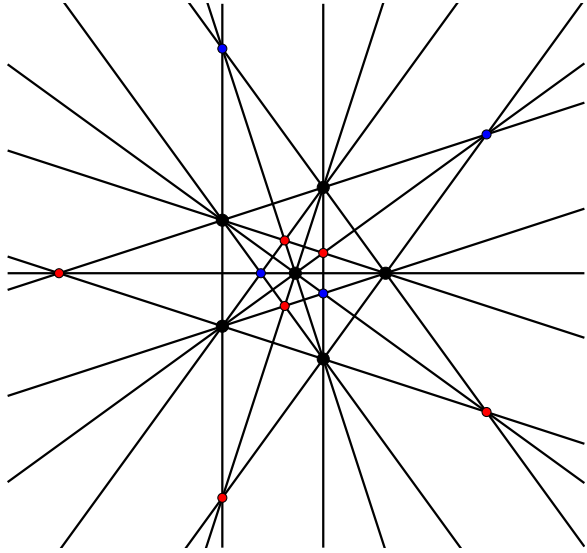
\begin{figure}
\begin{center}

\begin{tikzpicture}[scale=0.4]

\clip(-9.563125000000033,-9.128281249999985) rectangle (9.60687499999997,8.96171874999999);
\draw [line width=0.3mm,domain=-9.563125000000033:9.60687499999997] plot(\x,{(-8.559508646656381--2.8531695488854605*\x)/-2.0729490168751576});
\draw [line width=0.3mm,domain=-9.563125000000033:9.60687499999997] plot(\x,{(-5.2900672706322585--1.7633557568774194*\x)/-5.427050983124842});
\draw [line width=0.3mm,domain=-9.563125000000033:9.60687499999997] plot(\x,{(-0.-0.*\x)/-3.});
\draw [line width=0.3mm,domain=-9.563125000000033:9.60687499999997] plot(\x,{(--5.2900672706322585-1.7633557568774194*\x)/-5.427050983124842});
\draw [line width=0.3mm,domain=-9.563125000000033:9.60687499999997] plot(\x,{(--8.559508646656381-2.8531695488854605*\x)/-2.0729490168751576});
\draw [line width=0.3mm] (0.9270509831248424,-9.128281249999985) -- (0.9270509831248424,8.96171874999999);
\draw [line width=0.3mm,domain=-9.563125000000033:9.60687499999997] plot(\x,{(-0.-2.8531695488854605*\x)/-0.9270509831248424});
\draw [line width=0.3mm,domain=-9.563125000000033:9.60687499999997] plot(\x,{(-5.290067270632258-4.61652530576288*\x)/-3.3541019662496847});
\draw [line width=0.3mm,domain=-9.563125000000033:9.60687499999997] plot(\x,{(-8.559508646656383-1.089813792008041*\x)/-3.3541019662496847});
\draw [line width=0.3mm,domain=-9.563125000000033:9.60687499999997] plot(\x,{(-0.-1.7633557568774194*\x)/2.4270509831248424});
\draw [line width=0.3mm,domain=-9.563125000000033:9.60687499999997] plot(\x,{(-5.290067270632258-4.61652530576288*\x)/3.3541019662496847});
\draw [line width=0.3mm] (-2.4270509831248424,-9.128281249999985) -- (-2.4270509831248424,8.96171874999999);
\draw [line width=0.3mm,domain=-9.563125000000033:9.60687499999997] plot(\x,{(-0.--1.7633557568774194*\x)/2.4270509831248424});
\draw [line width=0.3mm,domain=-9.563125000000033:9.60687499999997] plot(\x,{(-8.559508646656383-1.089813792008041*\x)/3.3541019662496847});
\draw [line width=0.3mm,domain=-9.563125000000033:9.60687499999997] plot(\x,{(-0.--2.8531695488854605*\x)/-0.9270509831248424});
\begin{scriptsize}
\draw [fill=black] (3.,0.) circle (2 mm);
\draw [fill=black] (-2.4270509831248424,1.7633557568774194) circle (2 mm);
\draw [fill=black] (0.9270509831248424,-2.8531695488854605) circle (2 mm);
\draw [fill=black] (0.9270509831248424,2.8531695488854605) circle (2 mm);
\draw [fill=black] (-2.4270509831248424,-1.7633557568774194) circle (2 mm);
\draw [fill=black] (0.,0.) circle (2 mm);
\draw [fill=blue] (-2.4270509831248424,7.469694854648339) circle (1.5 mm);
\draw [fill=red] (-7.8541019662496865,0.) circle (1.5 mm);
\draw [fill=red] (-2.4270509831248424,-7.46969485464834) circle (1.5 mm);
\draw [fill=red] (6.354101966249686,-4.61652530576288) circle (1.5 mm);
\draw [fill=blue] (6.354101966249685,4.61652530576288) circle (1.5 mm);
\draw [fill=red] (-0.35410196624968443,1.0898137920080413) circle (1.5 mm);
\draw [fill=blue] (-1.1458980337503155,0.) circle (1.5 mm);
\draw [fill=red] (-0.3541019662496846,-1.0898137920080413) circle (1.5 mm);
\draw [fill=blue] (0.9270509831248425,-0.6735419648693781) circle (1.5 mm);
\draw [fill=red] (0.9270509831248425,0.6735419648693781) circle (1.5 mm);

\end{scriptsize}

\end{tikzpicture}

\end{center} 

\caption{\label{fig:SimplicialArr}The simplicial arrangement $\protect\cA(15)_{120}$}
\end{figure}

\subsubsection{\label{subsec:Free-arrangements}Free arrangements}

In \cite{OSO}, we study other arrangements of six lines $\cF_{0}$,
which we call flashing arrangements. They have the property that the
image $\cF_{1}=\ldt(\cF_{0})$ is also a flashing arrangement and
$\ldt(\cF_{1})=\cF_{0}$. The arrangement $\cL_{2}(\cD(\cF_{0}))$
has $13$ lines. 

Lukas Kühne pointed out to us the paper \cite{ACKN}, where is studied,
in view of Terao's Conjecture, two line arrangements $L_{13}$ and
$L_{15}$ of $13$ lines and $15$ lines, with one dimensional moduli.
Both arrangements have the property that they are free but non recursively
free arrangements. It turns out that the arrangements of $15$ lines
$\cL_{2}(\cD(\cC))$ associated to the unassuming arrangements $\cC$
in $\kBS$ are the arrangements $L_{15}$. The arrangements $\cL_{2}(\cD(\cF))$
associated to the flashing arrangements are the arrangements $L_{13}$. 

We found our first example of unassuming arrangement (and therefore
the nonbases $\text{NB}_{1}$ (see Section \eqref{subsec:Matroids-and-six})
as a sub-arrangement of $\ldt(\cA(15)_{120})$; the flashing arrangements
were also derived from $\cA(15)_{120}$. We were then searching by
random for a line arrangement $\co$ with the smallest number of lines
possible such that the number of lines of the sequence defined by
$\cC_{k+1}=\ldt(\cC_{k})$ diverges to infinity. 

We found the second nonbases $\text{NB}_{2}$ (see Equation \eqref{eq:triple2})
with a rigid realization space by searching unassuming line arrangements
contained in the Ceva line arrangements.

\vspace{3mm}

\noindent Xavier Roulleau\\
Université d'Angers, \\
CNRS, LAREMA, SFR MATHSTIC, \\
F-49000 Angers, France 

\noindent xavier.roulleau@univ-angers.fr
\end{document}